\documentclass[11pt]{article}
\usepackage{amsmath}
\usepackage{amsthm}
\usepackage{amsfonts}
\usepackage{amssymb}
\usepackage{xcolor}
\usepackage{mathtools}
\usepackage[latin1]{inputenc}

\newtheorem{theorem}{Theorem}[section]
\newtheorem{lemma}[theorem]{Lemma}
\newtheorem{coro}[theorem]{Corollary}

\newtheorem{definition}[theorem]{Definition}
\newtheorem{remark}[theorem]{Remark}
\newtheorem{example}[theorem]{Example}

\newcommand{\R}{\mathbb{R}}

\newcommand{\N}{\mathbb{N}}

\begin{document}
\begin{center}
\textbf{\Large On $L^p$-spaces of functions with values in locally convex spaces}\\[2mm]
\textbf{Matthieu F. Pinaud\footnote{Supported by DICYT-USACH (grant 042632PC-POSTDOC)}, Humberto Prado}
\end{center}

\begin{abstract}
\noindent
We study Lusin-measurable functions with values in locally convex spaces. In particular, the behavior of pointwise limits of sequences of Lusin-measurable functions and exhibit pathological phenomena arising in the nonmetrizable setting. Moreover, we establish approximation and density results for $L^p$-spaces constructed with this notion of measurability, including the density of simple functions in Hausdorff locally convex spaces and convergence results obtained through dyadic approximations.
\end{abstract}
\noindent
\textbf{MSC 2020 subject classification}: 46G10 (primary);  28A20, 28B05, 46A03 \\
\textbf{Keywords}: Locally convex spaces, vector-valued functions, $L^p$-spaces, Lusin-measurable functions

\section{Introduction}
    
Let $a<b$ be real numbers. For the closed interval $[a,b]$, we consider the Borel $\sigma$-algebra $\mathcal{B}([a,b])$ and the Lebesgue $\sigma$-algebra $\mathcal{L}([a,b])$. We denote the Lebesgue measure on $[0,1]$ by $\lambda$. For a real Banach space $E$, the classical theory of Lebesgue spaces $L^p([a,b],E)$ of vector-valued functions is well understood via Bochner-measurability.\\ 

\noindent
Many problems in infinite-dimensional analysis naturally arise in the setting of locally convex spaces. For example, the spaces encountered in distribution theory are, in general, non-metrizable locally convex spaces. Furthermore, solution curves of quantum stochastic differential equations take values in spaces of continuous linear operators between nuclear locally convex spaces (see, e.g., \cite{HRg}). Likewise, the theory of regularity of Lie groups (and half-Lie groups) is concerned with $L^p$
functions taking values in sequentially complete locally convex Lie algebras.
(see e.g. \cite{GL2, GL1, GaN, Nin1, Pin}).\\

\noindent
Let $E$ be a real topological vector space, $E'$ its topological dual space and $\mathcal{B}(E)$ its Borel $\sigma$-algebra. We recall  different notions of measurability for a function $\gamma:[a,b]\to E$.

\begin{itemize}
        \setlength\itemsep{0cm}
            \item[a)] The function $\gamma$ is Borel-measurable if $\gamma^{-1}(A) \in \mathcal{B}([a,b])$ for each $A\in \mathcal{B}(E)$.
            \item[b)] The function $\gamma$ is Lebesgue-measurable if $\gamma^{-1}(A) \in \mathcal{L}([a,b])$ for each $A\in\mathcal{B}(E)$.
            \item[c)] The function $\gamma$ is Lusin-measurable if for each $\varepsilon>0$ there exists a compact subset $K_\varepsilon$ of $[a,b]$ such that $\gamma|_{K_\varepsilon}$ is continuous and $\lambda\left ([a,b]\setminus K_\varepsilon\right)\leq \varepsilon$.
            \item[d)] The function $\gamma$ is strongly-measurable if it is the pointwise limit a.e. of a sequence of Lebesgue-measurable simple functions $(\beta_n)_n$. 
            \item[e)] The function $\gamma$ is weakly-measurable if for each $f\in E'$, the function $f\circ \gamma:[a,b]\to \mathbb{R}$ is Lebesgue-measurable. 
\end{itemize}

\noindent
In the classical case, when $E$ is finite-dimensional, Lebesgue, Lusin, strongly and weakly-measurability are all equivalents. If $E$ is any second countable topological vector space, by the Lusin's Theorem, Borel-measurability implies Lusin-measurability (see e.g. \cite{Anal,TTa}). If $E$ is a Banach space, by the Pettis Measurability Theorem (see \cite{Anal}), a function is strongly-measurable if and only if it is separably valued and weakly-measurable. We notice that, whenever $E$ is a notmetrizable locally convex space, then these notions are not equivalent since locally convex spaces introduce several pathologies that do not arise in the Banach setting. For instance, the sum of two Borel-measurable functions may fail to be a Borel-measurable function (see e.g. \cite{Stone}) and the pointwise limit of continuous functions may fail to be Lusin-measurable (see Remark \ref{pointwise}).\\

\noindent
The locally convex setting has already been studied by several authors (see e.g. \cite{FPM, PBr, GL2, GL1, Lewis, Nin1, RPa}). More recently, Natalie Nikitin has proved that, for every Lusin-measurable function $\gamma$ there exists a Borel-measurable function $\beta$ such that $\gamma=\beta$ a.e.\\

\noindent
In the case of Banach spaces, the classical theory of $L^p$-spaces is well understood. For example, for Fr\'echet spaces, in \cite{GL2}, H. Gl\"ockner has established classical results for $L^p$-spaces using Borel-measurability. In the locally convex setting, several authors have studied the construction of $L^p$-spaces using Lusin-measurability (see e.g. \cite{FPM, PBr, Nin1, Pin, RPa}).\\

\noindent
The article has been divided in two parts. The first one focuses on the study of the pointwise limit of a sequence of Lusin-measurable functions $(\gamma_n)_n$ and the conditions under which the pointwise limit is Lusin-measurable. \\

\noindent
In the second part, for $1\leq p< \infty$, we focus on the $L^p$-space of Lusin-measurable functions. We show some classical density and convergence results. We exhibit a pathology where the convergence in $L^p$ does not imply any kind of pointwise convergence, in opposition to the classical theory. Moreover, we show that every $L^p$-function is the limit of a sequence of simple functions defined by dyadic partition.\\

\noindent
Hereafter, we assume that all topological vector spaces and locally convex spaces are real vector spaces and are not necessarily Hausdorff. For details on locally convex spaces, we recall references \cite{GaN, Rud}. \\

\noindent
We recall the real-valued function $g:[0,1]\to [a,b]$, $g(t)=a+t(b-a)$. Now, if $\gamma:[a,b]\to E$ is Lusin-measurable, then the function $\gamma\circ g:[0,1]\to E$ is Lusin-measurable (see, \cite{Nin1}). This enables us to work with the closed interval $[0,1]$.

\section{Lusin-measurable functions}

 Let $E$ be a topological vector space and $\gamma:[0,1]\to E$ be a function. Then it is known that if $\gamma$ is strongly-measurable, then is weakly-measurable (see e.g. \cite{Lewis}). We notice that if $\gamma$ is Lusin-measurable, then is weakly-measurable. Indeed, Let $\varepsilon>0$ and $f\in E'$, then there exists a compact subset $K_\varepsilon$ of $[0,1]$ such that $\lambda([0,1]\setminus K_\varepsilon)<\varepsilon$ and $(f\circ\gamma)|_{K_\varepsilon} = f\circ (\gamma|_{K_\varepsilon})$ is continuous. Then $f\circ \gamma:[0,1]\to \mathbb{R}$ is Lusin-measurable and hence is Lebesgue-measurable.\\

\noindent
We emphasize the necessity to work with Lebesgue-measurable functions instead of Borel-measurable functions, because even for real valued-functions, there exists Lusin-measurable functions that are not Borel-measurables. Indeed, let $C\subseteq [0,1]$ be the Cantor set. By cardinality arguments, there exists a Lebesgue measurable subset $A\subseteq C$ which is not Borel measurable (see e.g. \cite[Exercise 1.14]{cantor}). We define the function  
\[ \gamma:[0,1]\to \R,\quad t\mapsto \chi_A(t).\]
    Then $\gamma$ is not Borel-measurable. To show $\gamma$ is Lusin-measurable, we consider $\varepsilon>0$. By regularity of the Lebesgue measure, we consider an open subset $U_\varepsilon$ of $[0,1]$ such that $C\subseteq U_\varepsilon$ and $\lambda(U_\varepsilon)<\varepsilon$. Then $K_\varepsilon=[0,1]\setminus U_\varepsilon$ is a compact subset of $[0,1]$ with $\lambda([0,1]\setminus K_\varepsilon) < \varepsilon$ and 
    \[ \gamma|_{K_\varepsilon}(t) = \chi_{A\cap K_\varepsilon}(t) = \chi_\emptyset(t) = 0,\quad \forall t\in K_\varepsilon.\]
    Therefore, $\gamma|_{K_\varepsilon}$ is continuous.\\ 
    
\noindent
Let $E$ and $F$ be topological vector spaces. It is straightforward that each continuous functions is Lusin-measurable. If $\gamma,\eta:[0,1]\to E$ are Lusin-measurable functions and $\varphi:E\to F$ is a continuous map, then $\varphi\circ \gamma:[0,1]\to F$ is Lusin-measurable. Since $E$ is a topological vector space, then for each $\alpha\in \mathbb{R}$, the function
\[ \alpha\gamma+\eta:[0,1]\to E,\quad t\mapsto \alpha\gamma(t)+\eta(t)\]
is Lusin-measurable. Therefore, the set of Lusin-measurable functions form a vector space.

\begin{lemma}\label{simple}
    Let $E$ be a topological vector space, $A\in\mathcal{L}([0,1])$ and $y_0\in E$. If $\chi_A$ denotes the characteristic function of $A$, then the function \[\beta:[0,1]\to E,\quad t\mapsto y_0\chi_A(t)\] 
    is Lusin-measurable.
\end{lemma}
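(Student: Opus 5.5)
The plan is to use inner regularity of Lebesgue measure on both $A$ and its complement, and so obtain a compact set on which $\chi_A$ is locally constant. Fix $\varepsilon>0$. Since $A$ and $B:=[0,1]\setminus A$ both lie in $\mathcal{L}([0,1])$, regularity of $\lambda$ yields compact sets $C_1\subseteq A$ and $C_2\subseteq B$ with
\[\lambda(A\setminus C_1)\leq \tfrac{\varepsilon}{2},\qquad \lambda(B\setminus C_2)\leq \tfrac{\varepsilon}{2}.\]
Put $K_\varepsilon:=C_1\cup C_2$. It is compact as a finite union of compact sets. Because $[0,1]=A\cup B$ is a disjoint union, we have $[0,1]\setminus K_\varepsilon=(A\setminus C_1)\cup(B\setminus C_2)$, and therefore $\lambda([0,1]\setminus K_\varepsilon)\leq\varepsilon$.

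The next step is continuity of $\beta|_{K_\varepsilon}$. On $C_1$ the function takes the constant value $y_0$, and on $C_2$ it takes the constant value $0$. The sets $C_1$ and $C_2$ are disjoint because $C_1\subseteq A$ and $C_2\subseteq B$. Each is closed in $K_\varepsilon$, so each is also relatively open in $K_\varepsilon$, being the complement of the other. Take an open set $U\subseteq E$. Its preimage under $\beta|_{K_\varepsilon}$ is one of $\emptyset$, $C_1$, $C_2$ or $K_\varepsilon$, according to which of $y_0$ and $0$ lie in $U$. All four sets are open in $K_\varepsilon$, so $\beta|_{K_\varepsilon}$ is continuous. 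No Hausdorff or local convexity assumption on $E$ is used here. This completes the verification of Lusin-measurability.

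The main obstacle is the regularity step, namely producing the compact inner approximations of Lebesgue measurable sets. This is standard: $\lambda$ is inner regular by compact sets on $\mathcal{L}([0,1])$. Alternatively, I would take an open $U\supseteq B$ with $\lambda(U\setminus B)$ small and set $C_1=[0,1]\setminus U$, and do the same for $C_2$. This matches the argument already used in the paper for the Cantor-set example. Everything else is routine.
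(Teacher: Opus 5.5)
Your proof is correct and follows essentially the same route as the paper: take compact inner approximations of $A$ and of $[0,1]\setminus A$, set $K_\varepsilon$ equal to their union, and observe that $A\cap K_\varepsilon$ is clopen in $K_\varepsilon$. Your explicit check of preimages of open sets, noting that no Hausdorff or local convexity assumption is used, is just a slightly more detailed version of the paper's continuity step.
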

\begin{proof}
    For $\varepsilon>0$, we must obtain a compact set $K_\varepsilon\subseteq [0,1]$ such that $\lambda\left( [0,1]\setminus K_\varepsilon\right)<\varepsilon$ and 
    \[ \beta|_{K_\varepsilon}:K_\varepsilon\to E,\quad t\mapsto \beta|_{K_\varepsilon}(t)= \left\{\begin{array}{ll}
       y_0,  & \text{ if $t\in A\cap K_\varepsilon$}  \\
       0,  & \text{ if $t\in K_\varepsilon\setminus A$},
    \end{array}\right.\] 
    is continuous, i.e., the set $A\cap K_\varepsilon$ is open and closed in $K_\varepsilon$.\\
    By regularity of the Lebesgue measure, for $\varepsilon>0$, there exists a compact subset $F_1$ of $[0,1]$ with $F_1\subseteq A$ such that
    \[ \lambda(A\setminus F_1) < \varepsilon/2.\] 
    and a compact subset $F_2$ of $[0,1]$ with $F_2\subseteq [0,1] \setminus A$ such that 
    \[ \lambda\left( \left( [0,1]\setminus A\right)\setminus F_2 \right) < \varepsilon/2. \]
    Next, we define $K_\varepsilon=F_1\cup F_2$. Thus
    \begin{align*}
        [0,1]\setminus K_\varepsilon 
        & \subseteq [A\setminus F_1]\cup [ ([0,1]\setminus A)\setminus F_2].
    \end{align*}
    Hence
    \[ \lambda([0,1]\setminus K_\varepsilon)\leq \lambda(A\setminus F_1)+\lambda(([0,1]\setminus A)\setminus F_2) < \varepsilon.\] 
    Since $F_1$ and $F_2$ are closed subsets of $K_\varepsilon$, the set $A\cap K_\varepsilon$ is closed in $K_\varepsilon$. We notice that
    \[ A\cap K_\varepsilon = F_1.\]
    Then $F_1$ is open in $K_\varepsilon$, since 
    \[ K_\varepsilon \setminus (A\cap K_\varepsilon) = K_\varepsilon \cap F_1^c = F_2.\] 
    Therefore, $\beta|_{K_\varepsilon}$ is continuous.    
\end{proof}

\begin{definition}
    Let $E$ be a topological vector space. A function $\beta:[0,1]\to E$ is called a simple function if there exists $y_1,...,y_n\in E$ and $A_1,...,A_n\in \mathcal{L}([0,1])$ such that
    \[ \beta=\sum_{i=1}^n y_i\chi_{A_i}.\]
    By the previous lemma, each simple function is Lusin-measurable. We denote the vector space of simple function by $\mathcal{S}([0,1],E)$.
\end{definition}

\begin{lemma}\label{Ksequence}
    Let $E$ be a topological vector space and $\gamma_n:[0,1]\to E$  a sequence of Lusin-measurable functions and let $\varepsilon>0.$ Then there exists a compact subset $H_\varepsilon$ of $[0,1]$ such that $\lambda([0,1]\setminus H_\varepsilon)<\varepsilon.$ Moreover, for each $n\in \mathbb{N}$, the function $\gamma_n|_{H_\varepsilon}$ is continuous.
\end{lemma}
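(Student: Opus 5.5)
The plan is the standard geometric-series argument combined with the fact that compactness is preserved under countable intersections. Fix $\varepsilon>0$. For each $n\in\mathbb{N}$, apply the definition of Lusin-measurability of $\gamma_n$ with tolerance $\varepsilon/2^{n+1}$. This gives a compact set $K_n\subseteq[0,1]$ such that $\gamma_n|_{K_n}$ is continuous and
\[
\lambda([0,1]\setminus K_n)\leq \varepsilon/2^{n+1}.
\]
We then define
\[
H_\varepsilon=\bigcap_{n\in\mathbb{N}} K_n .
\]

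First, $H_\varepsilon$ is compact. Each $K_n$ is closed in $[0,1]$, so the intersection is a closed subset of the compact space $[0,1]$.

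Second, the measure estimate follows from De Morgan's law and countable subadditivity:
\[
\lambda([0,1]\setminus H_\varepsilon)=\lambda\Bigl(\bigcup_{n}([0,1]\setminus K_n)\Bigr)\leq \sum_{n=1}^\infty \frac{\varepsilon}{2^{n+1}}=\frac{\varepsilon}{2}<\varepsilon .
\]
The extra factor $2$ in the tolerances is what makes the final inequality strict, since the definition only gives "$\leq$".

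Third, each $\gamma_n|_{H_\varepsilon}$ is continuous. Since $H_\varepsilon\subseteq K_n$, we have $\gamma_n|_{H_\varepsilon}=(\gamma_n|_{K_n})|_{H_\varepsilon}$. This is the restriction of a continuous map to a subspace carrying the relative topology, so it is continuous. Note that no property of $E$ is used here; in particular the argument does not need $E$ to be Hausdorff or metrizable.

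There is no real obstacle. The only points needing care are choosing the tolerances summably, with the factor $2$ to obtain the strict inequality, and checking that a countable intersection of compact subsets of $[0,1]$ is compact. If one wants to avoid $H_\varepsilon=\emptyset$ issues, note they do not arise: the measure bound already forces $\lambda(H_\varepsilon)\geq 1-\varepsilon/2>0$ when $\varepsilon<2$.
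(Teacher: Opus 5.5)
Your proof is correct and follows essentially the same argument as the paper: choose compact sets $K_n$ on which $\gamma_n$ is continuous with summable complementary measures, intersect them, and conclude via countable subadditivity and restriction of continuous maps. Your extra factor $2$ in the tolerances is a slightly more careful way to obtain the strict bound $\lambda([0,1]\setminus H_\varepsilon)<\varepsilon$ from the definition's non-strict one. The paper glosses over this point: it takes $<\varepsilon/2^n$ directly and writes $\leq\varepsilon$ at the end.
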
 
\begin{proof}    
Given $n\in\mathbb{N}$, we consider a compact subset $K_{\varepsilon,n}\subseteq [0,1]$ such that $\lambda([0,1]\setminus K_{\varepsilon,n})<{\varepsilon}/{2^n}$ and $\gamma_n|_{K_{\varepsilon,n}}$ is continuous. Next we define the set
    \[ H_\varepsilon = \bigcap_{n=1}^\infty K_{\varepsilon,n}.\] 
    Thus $H_\varepsilon$ is compact in $[0,1]$ and 
    \begin{align*}
        \lambda\left( [0,1]\setminus H_\varepsilon \right) = \lambda\left(  \bigcup_{n=1}^\infty  [0,1]\setminus K_{\varepsilon,n} \right) \leq \sum_{n=1}^\infty \lambda([0,1]\setminus K_{\varepsilon,n})  \leq \sum_{n=1}^\infty \frac{\varepsilon}{2^n} = \varepsilon.
    \end{align*}
    Moreover, since $H_\varepsilon\subseteq K_{\varepsilon,n}$, each $\gamma_n|_{H_\varepsilon}$ is continuous.
\end{proof}
\begin{theorem}\label{equi}
    Let $E$ be a locally convex space. A function $\gamma:[0,1]\to E$ is Lusin-measurable if, and only if, for each $\varepsilon>0$, there exists $A_\varepsilon\in \mathcal{L}([0,1])$ such that $\lambda([0,1]\setminus A_\varepsilon)<\varepsilon$ and $\gamma|_{A_\varepsilon}$ is uniform limit of simple functions. 
\end{theorem}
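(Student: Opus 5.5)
The plan is to prove the two implications separately. Throughout, ``uniform limit'' is read in the sense of the uniform structure of the locally convex space $E$: $\gamma|_{A}$ is the uniform limit of a sequence $(\beta_n)_n$ of simple functions if, for every continuous seminorm $p$ on $E$,
\[ \sup_{t\in A} p\bigl(\gamma(t)-\beta_n(t)\bigr)\to 0 \quad\text{as } n\to\infty. \]
The key point is that both directions can be handled seminorm by seminorm. This should avoid any metrizability or Hausdorff assumption on $E$.

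\emph{($\Rightarrow$)} Let $\varepsilon>0$ and take a compact $K\subseteq[0,1]$ with $\lambda([0,1]\setminus K)<\varepsilon$ and $\gamma|_K$ continuous; put $A_\varepsilon=K$. For $n\in\N$, let $I_{n,1},\dots,I_{n,2^n}$ be the dyadic intervals of length $2^{-n}$, chosen pairwise disjoint and covering $[0,1]$. For each $k$ with $I_{n,k}\cap K\neq\emptyset$, pick $t_{n,k}\in I_{n,k}\cap K$, and define
\[ \beta_n=\sum_{k} \gamma(t_{n,k})\,\chi_{I_{n,k}\cap K}. \]
This is a simple function in the sense of the paper, since the sets $I_{n,k}\cap K$ are Lebesgue measurable. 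For a fixed continuous seminorm $p$, the map $p\circ(\gamma|_K(\cdot)-\gamma|_K(\cdot))$ is continuous on the compact metric space $K\times K$. Hence $\gamma|_K$ is uniformly continuous with respect to $p$, that is,
\[ \omega_p(\delta)=\sup\{p(\gamma(s)-\gamma(t)) : s,t\in K,\ |s-t|\le\delta\}\to 0 \quad\text{as } \delta\to 0. \]
Since every $t\in K$ lies in a unique $I_{n,k}$ together with $t_{n,k}$, we get $\sup_{t\in K}p(\gamma(t)-\beta_n(t))\le\omega_p(2^{-n})\to 0$. This holds for every $p$, and the sequence $(\beta_n)_n$ does not depend on $p$. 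The dyadic construction is exactly what lets a single sequence work for all seminorms at once, which is why no countability of the seminorm family is needed.

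\emph{($\Leftarrow$)} Let $\varepsilon>0$ and take $A=A_{\varepsilon/2}$, with $\gamma|_A$ the uniform limit of simple functions $\beta_n$. By inner regularity of $\lambda$, choose a compact $K\subseteq A$ with $\lambda(A\setminus K)<\varepsilon/4$. Each $\beta_n$ is Lusin-measurable by Lemma \ref{simple} (and linearity), so Lemma \ref{Ksequence} yields a compact $H$ with $\lambda([0,1]\setminus H)<\varepsilon/4$ such that every $\beta_n|_H$ is continuous. Set $K_\varepsilon=K\cap H$; it is compact and $\lambda([0,1]\setminus K_\varepsilon)<\varepsilon$.

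It remains to show that $\gamma|_{K_\varepsilon}$ is continuous. Fix $t_0\in K_\varepsilon$, a continuous seminorm $p$ and $\eta>0$. Choose $n$ with $\sup_{K_\varepsilon}p(\gamma-\beta_n)<\eta/3$, and then a neighbourhood $V$ of $t_0$ in $K_\varepsilon$ on which $p(\beta_n(t)-\beta_n(t_0))<\eta/3$. The triangle inequality gives $p(\gamma(t)-\gamma(t_0))<\eta$ on $V$. Since the topology of $E$ is generated by its continuous seminorms, this proves continuity of $\gamma|_{K_\varepsilon}$ at $t_0$, and hence Lusin-measurability of $\gamma$.

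The main obstacle is in ($\Rightarrow$): obtaining a \emph{sequence}, rather than a net, of simple functions that converges uniformly for all seminorms simultaneously when $E$ is not metrizable. The dyadic construction handles this because its rate is governed by the modulus $\omega_p$ of each seminorm separately while the functions $\beta_n$ are independent of $p$. In ($\Leftarrow$), the only care needed is the bookkeeping with Lemma \ref{Ksequence} and the inner approximation of $A_\varepsilon$ by a compact set.
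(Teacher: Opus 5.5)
Your proof is correct and follows essentially the same route as the paper: for ($\Rightarrow$), a partition-based simple approximation on the compact set $K$, controlled by uniform continuity with respect to each seminorm separately; for ($\Leftarrow$), Lemma~\ref{simple} and Lemma~\ref{Ksequence}, inner regularity, and the fact that a uniform limit of continuous functions is continuous. The differences are cosmetic (dyadic intervals instead of arbitrary partitions of mesh $\le 1/n$, and a different splitting of $\varepsilon$); the dyadic choice is not essential, since any seminorm-independent sequence of partitions with mesh tending to $0$, as in the paper, already gives a single sequence that works for all seminorms.
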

\begin{proof}
    Let $\gamma$ be a Lusin-measurable function and $\varepsilon>0$. There exists a compact subset $K_\varepsilon$ of $[0,1]$ such that $\lambda([0,1]\setminus K_\varepsilon)<\varepsilon$ and $\gamma|_{K_\varepsilon}$ is continuous. For $n\in \N$, we consider a partition $\{t_0,...,t_n\}$ of $[0,1]$ such that 
    \[ \max_{i\in \{1,...,n\}} \lvert t_i - t_{i-1}\lvert \leq 1/n.\] 
    For $i\in \{1,...,n\}$, if $[t_{i-1},t_i)\cap K_\varepsilon\neq \emptyset$, let us consider any $s_i\in [t_{i-1},t_i)\cap K_\varepsilon$ and define the function $\beta_{n}:[0,1]\to E$ by
    \[ \beta_{n}(t) = \sum\limits_{\substack{1\leq i\leq n \\ [t_{i-1},t_i)\cap K_\varepsilon\neq \emptyset}} \gamma(s_i)\chi_{[t_{i-1},t_i)\cap K_\varepsilon}(t) + \gamma(1)\chi_{\{1\}}(t),\quad \forall t\in [0,1].\] 
    Therefore, this sequence $(\beta_n)_n$ of simple functions converges uniformly on $K_\varepsilon$ to $\gamma$. Indeed, let $\tau>0$, since $\gamma|_{K_\varepsilon}$ is uniformly continuous, for each continuous seminorm $q$ on $E$, there exists $\delta_{\tau,q}>0$ such that 
    \[ \lvert t-s\lvert <\delta_{\tau,q}\implies q(\gamma|_{K_\varepsilon}(t)-\gamma|_{K_\varepsilon}(s))<\tau.\] 
    Let us consider $n_{\tau,q}\in \N$ such that
    \[ \frac{1}{n}<\delta_{\tau,q},\quad \forall n\geq n_{\tau,q}.\] 
    Therefore, for each $t\in K_\varepsilon\setminus\{1\}$, there exists some $j\in \{1,...,n\}$ such that $t,s_j\in [t_{j-1},t_j]$. Hence, if $\lvert t-s_j\lvert <1/n<\delta_{\tau,q}$, we have 
    \[ q(\gamma|_{K_\varepsilon}(t)-\beta_{n}(t)) = q(\gamma(t)- \gamma|_{K_\varepsilon}(s_j)) < \tau,\quad \forall n\geq n_{\tau,q}\] 
    and $\gamma(1)-\beta_{n}(1)=0$. Therefore
    \[ \sup_{t\in K_\varepsilon} q(\gamma|_{K_\varepsilon}(t)-\beta_{n}(t))\leq\tau,\quad \forall n\geq n_{\tau,q}.\] 
    For the second part, let us assume that $\gamma:[0,1]\to E$ verifies that for each $\varepsilon>0$, there exists $A_\varepsilon\in \mathcal{L}([0,1])$ such that $\lambda([0,1]\setminus A_\varepsilon)<\varepsilon/3$ and $\gamma|_{A_\varepsilon}$ is the uniform limit of the sequence $(\beta_n)_n$ of simple functions. By Lemma \ref{simple}, each simple function is Lusin-measurable and by Lemma \ref{Ksequence}, there exists a compact subset $H_\varepsilon$ of $[0,1]$ such that $\lambda([0,1]\setminus H_\varepsilon)<\varepsilon/3$ and each $\beta_n|_{H_\varepsilon}$ is continuous. By regularity of the Lebesgue measure, there exists a compact subset $K_\varepsilon$ of $[0,1]$ such that $K_\varepsilon\subseteq A_\varepsilon$ and $\lambda(A_\varepsilon\setminus K_\varepsilon)<\varepsilon/3$. Therefore, if $F_\varepsilon=H_\varepsilon\cap K_\varepsilon$, we have
    \[ \lambda([0,1]\setminus K_\varepsilon) \leq \lambda([0,1]\setminus A_\varepsilon) +\lambda(A_\varepsilon\setminus K_\varepsilon)<2\varepsilon/3\]
    and 
    \[ \lambda([0,1]\setminus F_\varepsilon) \leq \lambda([0,1]\setminus H_\varepsilon) +\lambda([0,1]\setminus K_\varepsilon)<\varepsilon.\]
    Moreover, since $(\beta|_{F_\varepsilon})_n$ is a sequence of continuous functions which converge uniformly to $\gamma|_{F_\varepsilon}$, the function $\gamma|_{F_\varepsilon}$ is continuous and hence, $\gamma$ is Lusin-measurable.
\end{proof}

\begin{coro}\label{1.8}
    Assume that $E$ is a locally convex space and $\gamma:[0,1]\to E$ a Lusin-measurable function. Then, for each $\varepsilon>0$ and each continuous seminorm $q$ on $E$, there exists a compact subset $K_{\varepsilon}$ of $[0,1]$ and a simple function $\beta_{\varepsilon,q}:[0,1]\to E$ such that $\lambda([0,1]\setminus K_{\varepsilon})<\varepsilon$, the function $\gamma|_{K_\varepsilon}$ is continuous and 
    \[ \sup_{t\in K_\varepsilon} q(\gamma(t)-\beta_{\varepsilon,q}(t))< \varepsilon. \]
\end{coro}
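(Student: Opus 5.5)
The plan is to read this off from the first half of the proof of Theorem \ref{equi}. Fix $\varepsilon>0$ and a continuous seminorm $q$ on $E$. Since $\gamma$ is Lusin-measurable, there is a compact set $K_\varepsilon\subseteq[0,1]$ with $\lambda([0,1]\setminus K_\varepsilon)<\varepsilon$ and $\gamma|_{K_\varepsilon}$ continuous. On this same $K_\varepsilon$, the proof of Theorem \ref{equi} builds the simple functions
\[ \beta_n=\sum_{\substack{1\leq i\leq n\\ [t_{i-1},t_i)\cap K_\varepsilon\neq\emptyset}}\gamma(s_i)\chi_{[t_{i-1},t_i)\cap K_\varepsilon}+\gamma(1)\chi_{\{1\}}, \]
using partitions of mesh at most $1/n$ and points $s_i\in[t_{i-1},t_i)\cap K_\varepsilon$. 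Each $\beta_n$ is simple because $[t_{i-1},t_i)\cap K_\varepsilon$ and $\{1\}$ are Lebesgue measurable.

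Next I apply the uniform convergence estimate from that proof with the tolerance $\tau=\varepsilon/2$ and the fixed seminorm $q$. Because $K_\varepsilon$ is compact, $\gamma|_{K_\varepsilon}$ is uniformly continuous with respect to $q$: the map $q\circ(\gamma(t)-\gamma(s))$ is continuous on $K_\varepsilon\times K_\varepsilon$ and vanishes on the diagonal, so the usual compactness argument applies. This gives $n_{\tau,q}$ with
\[ \sup_{t\in K_\varepsilon}q(\gamma(t)-\beta_n(t))\leq\varepsilon/2<\varepsilon \quad\text{for all } n\geq n_{\tau,q}. \]
I then set $\beta_{\varepsilon,q}:=\beta_{n_{\tau,q}}$. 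It is precisely to pass from the non-strict inequality $\le\tau$ in the theorem to the strict inequality required here that I take $\tau=\varepsilon/2$.

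The main point to check is that the estimate holds for every $t\in K_\varepsilon$, including points lying in a cell that also contains $s_j$. For $t\in K_\varepsilon\setminus\{1\}$, $t$ belongs to exactly one cell $[t_{j-1},t_j)\cap K_\varepsilon$. That cell is nonempty, so $s_j$ exists and $\beta_n(t)=\gamma(s_j)$ with $|t-s_j|\le 1/n<\delta_{\tau,q}$. At $t=1$ the difference $\gamma(1)-\beta_n(1)$ is zero.

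The step needing care is an overlap at the right endpoint: if $1\in K_\varepsilon$, then $\{1\}$ meets no half-open cell $[t_{i-1},t_i)$, so the two pieces of $\beta_n$ do not overlap there. Beyond this, the corollary is a direct specialization of Theorem \ref{equi}, with the same set $K_\varepsilon$ giving both continuity and the approximation.
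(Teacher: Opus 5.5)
Your proposal is correct and follows the paper's proof. You take $K_\varepsilon$ from Lusin-measurability, reuse the simple functions $\beta_n$ from the proof of Theorem~\ref{equi}, and set $\beta_{\varepsilon,q}=\beta_n$ for $n$ large; choosing $\tau=\varepsilon/2$ to get the strict inequality, and checking uniform continuity and the endpoint $t=1$, just makes explicit details the paper leaves implicit.
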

\begin{proof}
    Following the proof of Theorem \ref{equi}, for each $\varepsilon>0$ and each continuous seminorm $q$ on $E$, there exists a compact subset $K_\varepsilon$ of $[0,1]$ and a sequence $(\beta_n)_n$ of simple functions which converge uniformly to $\gamma$ in $K_\varepsilon$, i.e., there exists $n_{\varepsilon,q}\in \N$ such that
    \[ \sup_{t\in K_\varepsilon} q(\gamma(t)-\beta_{n}(t))< \varepsilon,\quad \forall n\geq n_{\varepsilon,q}. \]
    Hence $\beta_{\varepsilon,q}=\beta_{n_{\varepsilon,q}}$.
\end{proof}

\begin{coro}
Let $E$ be a locally convex space. If a function $\gamma:[0,1]\to E$ is uniform limit a.e. of simple functions, then it is Lusin-measurable.    
\end{coro}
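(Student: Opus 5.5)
The plan is to reduce the statement to the ``if'' direction of Theorem \ref{equi}. The hypothesis says there is a Lebesgue-null set $N\subseteq[0,1]$ and a sequence $(\beta_n)_n$ of simple functions such that $\beta_n|_{[0,1]\setminus N}\to\gamma|_{[0,1]\setminus N}$ uniformly. Here ``uniformly'' means that for each continuous seminorm $q$ on $E$,
\[ \sup_{t\in[0,1]\setminus N} q(\gamma(t)-\beta_n(t))\to 0 \quad (n\to\infty). \]
This is the same notion of uniform convergence used in the proof of Theorem \ref{equi}.

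First I set $A:=[0,1]\setminus N$. Since the Lebesgue $\sigma$-algebra is complete, $N\in\mathcal{L}([0,1])$, and hence $A\in\mathcal{L}([0,1])$ with $\lambda([0,1]\setminus A)=0$. Given $\varepsilon>0$, I take $A_\varepsilon:=A$ for every $\varepsilon$. Then $\lambda([0,1]\setminus A_\varepsilon)=0<\varepsilon$, and $\gamma|_{A_\varepsilon}$ is the uniform limit of the simple functions $\beta_n$, restricted to $A_\varepsilon$. This is exactly the criterion in Theorem \ref{equi}, so $\gamma$ is Lusin-measurable.

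To be safe I would also note the direct argument, which is what the proof of Theorem \ref{equi} does. By Lemma \ref{simple} and Lemma \ref{Ksequence}, there is a compact $H_\varepsilon$ with $\lambda([0,1]\setminus H_\varepsilon)<\varepsilon/2$ on which every $\beta_n$ is continuous. By inner regularity there is a compact $K_\varepsilon\subseteq A$ with $\lambda(A\setminus K_\varepsilon)<\varepsilon/2$. On $F_\varepsilon=H_\varepsilon\cap K_\varepsilon$ the continuous maps $\beta_n|_{F_\varepsilon}$ converge uniformly with respect to every continuous seminorm, so their limit $\gamma|_{F_\varepsilon}$ is continuous. This holds even if $E$ is not Hausdorff, since the usual $\tau/3$ argument only uses seminorms.

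The only point requiring care is the interpretation of ``uniform limit a.e.'': the null set has to be chosen independently of $n$ and of the seminorm $q$. If instead the exceptional set $N_q$ were allowed to depend on $q$, there could be uncountably many seminorms and the union of the $N_q$ need not be null. I will therefore state explicitly that a single null set $N$ is fixed in the hypothesis. With that reading, nothing beyond Theorem \ref{equi} is needed.
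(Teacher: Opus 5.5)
Your proposal is correct and matches the paper's proof. The paper likewise fixes a single null set $B$, takes $A_\varepsilon=[0,1]\setminus B$ for every $\varepsilon>0$, and invokes the ``if'' direction of Theorem \ref{equi}. Your supplementary direct argument and your remark that the exceptional set must not depend on $q$ are sound, but they are not needed.
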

\begin{proof}
    Let $(\beta_n)_n$ be a sequence of simple functions and $B\in \mathcal{L}([0,1])$ with $\lambda(B)=0$ such that $(\beta_n)_n$ converges uniformly to $\gamma$ on $[0,1]\setminus B$. Then for each $\varepsilon>0$, is enough to consider $A_\varepsilon=[0,1]\setminus B$ and the conclusion follows directly from Theorem \ref{equi}.
\end{proof}

\begin{coro}\label{stronglusin}
    Let $E$ be a locally convex space. If the function $\gamma:[0,1]\to E$ is Lusin-measurable, then is strongly-measurable.
\end{coro}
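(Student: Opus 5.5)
We need a sequence of simple functions converging pointwise almost everywhere to $\gamma$. The proof of Theorem \ref{equi} gives this piece by piece: on each of an increasing family of compact sets it produces simple functions converging uniformly (hence pointwise) to $\gamma$, and the sets exhaust $[0,1]$ up to a null set. The plan is to glue these pieces by a diagonal choice, so that convergence at each point holds for \emph{every} continuous seminorm. No metrizability is needed, because the construction in Theorem \ref{equi} uses a single sequence $(\beta_n)_n$ that works for all seminorms simultaneously.

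\textbf{Step 1: nested compact sets.} For each $k\in\N$ apply Lusin-measurability with $\varepsilon=1/k$ to obtain compacts $K_k'$. Put $K_k=K_1'\cup\dots\cup K_k'$. Then $K_k$ is compact and increasing, $\lambda([0,1]\setminus K_k)<1/k$, and $\gamma|_{K_k}$ is continuous, since a finite union of closed sets on each of which the restriction is continuous still gives a continuous restriction by the pasting lemma. Hence $N=[0,1]\setminus\bigcup_k K_k$ is a null set.

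\textbf{Step 2: partition simple functions.} For each $n$, use the fixed partition $t_i=i/n$ of $[0,1]$. Following Theorem \ref{equi}, choose points $s_i^{(k)}\in[t_{i-1},t_i)\cap (K_k\setminus K_{k-1})$ whenever that set is nonempty, with $K_0=\emptyset$. Define
\[ \beta_n=\sum_{k=1}^{n}\ \sum_{i} \gamma\bigl(s_i^{(k)}\bigr)\chi_{[t_{i-1},t_i)\cap (K_k\setminus K_{k-1})}+\gamma(1)\chi_{\{1\}}. \]
This is a simple function, because the sets involved are Lebesgue measurable.

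\textbf{Step 3: pointwise convergence.} Fix $t\in K_k\setminus K_{k-1}$. For $n\ge k$, $\beta_n(t)=\gamma(s)$ for some $s\in K_k$ with $|s-t|\le 1/n$. Take any continuous seminorm $q$ and any $\tau>0$. The uniform continuity of $\gamma|_{K_k}$ on the compact set $K_k$, as used in Theorem \ref{equi}, gives $q(\gamma(t)-\beta_n(t))<\tau$ for $n$ large. So $\beta_n(t)\to\gamma(t)$ for every $t\notin N$, and $\gamma$ is strongly-measurable.

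\textbf{Main obstacle.} The key point is that the uniform limits from Theorem \ref{equi} live on different sets $K_\varepsilon$, so they must be merged into one sequence without destroying convergence. Step 2 handles this by sampling each layer $K_k\setminus K_{k-1}$ only from within $K_k$. Doing so keeps the sample point inside a compact set on which $\gamma$ is continuous, and this is exactly what makes the $\tau$-estimate of Step 3 independent of how the other layers are treated.
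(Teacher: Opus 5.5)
Your proof is correct and follows essentially the paper's argument. Your layers $K_k\setminus K_{k-1}$ play the role of the disjointified sets $D_j$, on each layer you use the partition-sampling approximants from the proof of Theorem~\ref{equi}, and your truncation $k\le n$ is the paper's diagonal $\alpha_n=\sum_{i=1}^n\beta_{n,i}\chi_{D_i}$; the only difference is that you rebuild the approximants inline on nested compacts, and the pasting-lemma nesting is harmless but not needed, since each disjointified layer already lies in a compact set on which $\gamma$ is continuous.
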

\begin{proof}
    By Theorem \ref{equi}, for each $j\in \N$, there exists a set $A_j\in \mathcal{L}([0,1])$ such that $\lambda([0,1]\setminus A_j)<1/j$ and there exists a sequence of simple functions $(\beta_{n,j})_n$ that converges uniformly to $\gamma$ on $A_j$. We disjointify the sequence $(A_j)_j$: Let $D_1=A_1$ and 
    \[ D_j = A_j \setminus\left( \bigcup_{i=1}^{j-1}A_i\right),\quad \forall j\in \N.\]    
    Then $(D_j)_j$ is a sequence of disjoint sets with $D_j\subseteq A_j$. We set
    \[ A=\bigcup_{j=1}^\infty D_j = \bigcup_{j=1}^\infty A_j.\]
    Hence $A\in \mathcal{L}([0,1])$ and 
    \[ \lambda([0,1]\setminus A) \leq\lambda([0,1]\setminus A_j) < 1/j,\quad \forall j\in \N.\] 
    Therefore, $\lambda([0,1]\setminus A) =0$. For each $n\in \N$, we define the simple function
    \[ \alpha_n:[0,1]\to E,\quad \alpha_n(t)=\sum_{i=1}^n \beta_{n,i}(t)\chi_{D_i}(t).\]
    We notice that if $t\in [0,1]\setminus A$, then $\alpha_n(t)=0$. Let $t\in A$ be fixed. Then there exists a unique $k\in \N$ such that $t\in D_k\subseteq A_k$. Therefore $\alpha_n(t)=0$ if $n<k$ and
    \[ \alpha_n(t)=\beta_{n,k}(t),\quad \forall n\geq k.\]
    By hypothesis, $\beta_{n,k}(t)\to \gamma(t)$ as $n\to \infty$. Hence, the sequence of simple functions $(\alpha_n)_n$ converges pointwise a.e. to $\gamma$.
\end{proof}

We recall \cite[Theorem 7.5.1.]{Egorov}.
\begin{theorem}(Egorov's Theorem) Let $(S,d)$ be a separable metric space,  $B\in \mathcal{L}([0,1])$ and $\gamma:B\to S$ be a Borel-measurable function. If $(\gamma_n)_n$ is a sequence of Lebesgue-measurable functions that converges pointwise a.e. to $\gamma$, then for each $\varepsilon>0$, there exists a subset $A_\varepsilon\in \mathcal{L}(B)$ with $\lambda(B\setminus A_\varepsilon)<\varepsilon$ such that $(\gamma_n)_n$ converges uniformly to $\gamma$ on $A_\varepsilon$.
\end{theorem}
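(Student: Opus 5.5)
We follow the classical proof of Egorov's Theorem, adapted to the separable metric target $(S,d)$. Discard the null set where $\gamma_n(t)\not\to\gamma(t)$; it can be absorbed into the exceptional set at the end. So assume $\gamma_n\to\gamma$ pointwise on a set $B_0\in\mathcal{L}(B)$ with $\lambda(B\setminus B_0)=0$. For $k,n\in\N$ define
\[ E_n^k=\bigcup_{m\geq n}\{t\in B_0:\ d(\gamma_m(t),\gamma(t))\geq 1/k\}. \]

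The first and main step is to check that each $E_n^k$ is Lebesgue measurable. This reduces to measurability of $t\mapsto d(\gamma_m(t),\gamma(t))$, which is exactly where separability and Borel-measurability of $\gamma$ are used. Since $S$ is separable, $\mathcal{B}(S\times S)=\mathcal{B}(S)\otimes\mathcal{B}(S)$. The map $t\mapsto(\gamma_m(t),\gamma(t))$ is Lebesgue measurable into $S\times S$, because preimages of measurable rectangles are intersections of a Lebesgue set and a Borel set. The metric $d:S\times S\to\R$ is continuous, hence Borel. So the composition is Lebesgue measurable, and $E_n^k$ is a countable union of Lebesgue sets. I expect this step to be the real obstacle: without separability, the product $\sigma$-algebra could be strictly smaller than $\mathcal{B}(S\times S)$ and the argument would break.

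Next, for fixed $k$ the sets $E_n^k$ decrease in $n$, and $\bigcap_n E_n^k=\emptyset$ by pointwise convergence on $B_0$. Since $\lambda(B)\leq 1<\infty$, continuity from above gives $\lambda(E_n^k)\to 0$. Choose $n_k$ with $\lambda(E_{n_k}^k)<\varepsilon/2^k$, and set
\[ A_\varepsilon=B_0\setminus\bigcup_{k\geq1}E_{n_k}^k. \]
Then $A_\varepsilon\in\mathcal{L}(B)$, and
\[ \lambda(B\setminus A_\varepsilon)\leq\lambda(B\setminus B_0)+\sum_k\varepsilon/2^k=\varepsilon. \]
A strict inequality can be obtained by using $\varepsilon/2^{k+1}$ instead.

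Finally, for $t\in A_\varepsilon$ and $m\geq n_k$ we have $d(\gamma_m(t),\gamma(t))<1/k$, uniformly in $t$. Hence $(\gamma_n)_n$ converges uniformly to $\gamma$ on $A_\varepsilon$.
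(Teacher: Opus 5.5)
The paper gives no proof of its own here; it only recalls the theorem from the cited reference (Theorem 7.5.1 there), and your argument is the standard proof of that result and is correct. Obtaining measurability of $t\mapsto d(\gamma_m(t),\gamma(t))$ from $\mathcal{B}(S\times S)=\mathcal{B}(S)\otimes\mathcal{B}(S)$ is exactly where separability is needed, and continuity from above with the $\varepsilon/2^k$ choice finishes the proof; since each $\lambda(E^k_{n_k})<\varepsilon/2^k$ strictly, that choice already gives the strict bound $\lambda(B\setminus A_\varepsilon)<\varepsilon$.
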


\begin{coro}
    Let $E$ be a metrizable locally convex space. Then a function $\gamma:[0,1]\to E$ is strongly-measurable function if, and only if, is Lusin-measurable.
\end{coro}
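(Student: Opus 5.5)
One direction is already available. Corollary \ref{stronglusin} shows that Lusin-measurable implies strongly-measurable in every locally convex space, metrizable or not. So the work is the converse: if $E$ is metrizable and $\gamma$ is the a.e. pointwise limit of simple functions $(\beta_n)_n$, then $\gamma$ is Lusin-measurable. The plan is to apply Egorov's Theorem to obtain, for each $\varepsilon>0$, a set $A_\varepsilon\in\mathcal{L}([0,1])$ with $\lambda([0,1]\setminus A_\varepsilon)<\varepsilon$ on which $\beta_n\to\gamma$ uniformly. Theorem \ref{equi} then yields Lusin-measurability.

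Egorov needs a separable metric space and a Borel-measurable limit, so I first reduce to that setting. Fix a translation-invariant metric $d$ inducing the topology of $E$; if $E$ is not Hausdorff, I pass to the Hausdorff quotient $E/\overline{\{0\}}$ and lift back at the end. Let $S$ be the closure in $E$ of the $\mathbb{Q}$-linear span of the countably many values taken by all the $\beta_n$. Then $S$ is a separable metric space, and $\gamma(t)\in S$ for almost every $t$, on the full-measure set $B$ where $\beta_n(t)\to\gamma(t)$. On $B$ each $\beta_n$ is Lebesgue-measurable into $S$, since its preimages of Borel sets are finite unions of the measurable sets $A_i$. The limit $\gamma|_B$ is then Lebesgue-measurable into the metric space $S$, because pointwise limits of measurable functions into metric spaces are measurable: for closed $F$, $\gamma^{-1}(F)=\bigcap_k\bigcup_N\bigcap_{n\ge N}\beta_n^{-1}(F_{1/k})$, where $F_{1/k}$ is the open $1/k$-neighbourhood of $F$.

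The main obstacle is that Egorov as quoted asks for a Borel-measurable $\gamma$, whereas I only obtain Lebesgue measurability. I would handle this in one of two ways. The first is to note that the standard proof of Egorov only uses measurability of the sets $\{t: d(\beta_n(t),\gamma(t))\ge 1/k\}$, which are Lebesgue sets here. Alternatively, one replaces each $A_i$ by a Borel set differing from it by a null set, enlarging the exceptional null set accordingly, so that all $\beta_n$ and the limit become Borel on a Borel full-measure set. In either case Egorov gives $A_\varepsilon$ with uniform convergence in $d$.

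Finally, uniform convergence in $d$ implies uniform convergence with respect to every continuous seminorm $q$. Indeed, each $q$-ball $\{q<\tau\}$ contains some $d$-ball around $0$, and $d$ is translation invariant. So $\gamma|_{A_\varepsilon}$ is a uniform limit of simple functions in the sense of Theorem \ref{equi}, and that theorem gives that $\gamma$ is Lusin-measurable. The value of $\gamma$ on the null set $[0,1]\setminus B$ is irrelevant, since $A_\varepsilon$ can be chosen inside $B$.
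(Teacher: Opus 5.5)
Your proposal is correct and follows the paper's own proof essentially step for step. One direction is Corollary \ref{stronglusin}. For the converse you use a closed separable subset $S$ containing $\gamma(B)$, Lebesgue measurability of $\gamma|_B$, Egorov's Theorem, and then Theorem \ref{equi}. You are more careful than the paper at two points it passes over silently: you note that the quoted Egorov theorem asks for a Borel-measurable limit while only Lebesgue measurability of $\gamma|_B$ is available, and you justify that uniform convergence in a translation-invariant metric $d$ gives uniform convergence with respect to every continuous seminorm.
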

\begin{proof}
    Let us consider the metric $d$ generated by a countable family of continuous seminorms $\mathcal{P}$ on $E$ (see e.g. \cite{GaN}). Let $(\beta_n)$ be a sequence of simple functions (and hence, Borel-measurable) that converges pointwise to $\gamma$ on $B\in\mathcal{L}([0,1])$ with $\lambda([0,1]\setminus B)=0$. Since for each $n\in \N$, the set $\beta_n([0,1])$ is finite, the set
    \[ S=\overline{\cup_{n=1}^\infty \beta_n([0,1])}.\] 
    endowed with $d$ is a separable metric space and $\gamma(B)\subseteq S$. Therefore, we consider the function $\gamma|_B:B\to S$. Since $\gamma|_B$ is strongly-measurable and $S$ metrizable, the function $\gamma|_B$ is Lebesgue-measurable. Hence, by Egorov's theorem, we have that for each $\varepsilon>0$, there exists $A_\varepsilon\in \mathcal{L}(B)$ with $\lambda(B\setminus A_\varepsilon)<\varepsilon$ such that $(\beta_n)_n$ converges uniformly to $\gamma|_B$ on $A_\varepsilon$. Since $A_\varepsilon\subseteq B\subseteq [0,1]$, we see that
    \[ \lambda([0,1]\setminus A_\varepsilon) \leq \lambda([0,1]\setminus B)+\lambda(B\setminus A_\varepsilon) < \varepsilon.\]
    Hence, by Theorem \ref{equi}, the function $\gamma$ is Lusin-measurable. The second part of the proof is given by Corollary \ref{stronglusin}.
\end{proof}

\begin{remark}\label{example}
    In notmetrizable locally convex spaces, strongly-measurability does not imply Lusin-measurability. Indeed, let us consider the real vector space 
    \[ E=\{f:\mathbb{R}\to \mathbb{R}\},\] 
    and for each element $x_0\in \mathbb{R}$, we define the seminorm
    \[ q_{x_0} (f) = \lvert f(x_0)\lvert ,\quad \forall f\in E.\]
    Then we endow the space $E$ with the locally convex Hausdorff topology generated by the family of separating seminorms $S=\{q_{x_0} : x_0\in \mathbb{R}\}$. Since $\R$ is not countable, we notice that $E$ is notmetrizable. We consider the function $\gamma:[0,1]\to E$ given by
    \begin{align*}
        \gamma(t)(x) &= \left\{\begin{array}{ll}
        1 & \text{, if } x=t \\
        0 & \text{, if } x\neq t,
    \end{array}\right.
    \end{align*}
    for each $x\in \mathbb{R}.$ Then $\gamma$ is not Lusin-measurable. Indeed, let $K\subseteq [0,1]$ be a compact subset such that $\gamma|_K$ is continuous. Thus, by the the definition of the product topology, the map
    \[ \phi_x:K\to \mathbb{R},\quad s\mapsto \gamma(s)(x)=\delta_s (x)\] 
    is continuous. Moreover, if $x\not\in K$ then $\phi_x=0$ and if $x\in K$ the set
    \[ \phi_x^{-1}\left(\left(\frac{1}{2},\infty\right)\right) = \{ x\}\]
    is open in $K$. Now, since $K$ is equipped with the discrete topology, compactness implies that $K$ is finite. Therefore, $\lambda(K)=0$ and 
    \[ \lambda([0,1]\setminus K ) = 1.\]
    Thus, $\gamma$ cannot be Lusin-measurable. To show that $\gamma$ is strongly-measurable, we need a sequence of simple functions 
    \[ \beta_n:[0,1]\to E,\quad \beta_n(t)=\sum_{i=1}^{m_n} a_{i,n} \chi_{A_{i,n}}(t),\]
    where $a_{1,n},...,a_{m_n,n}\in E$ and $A_{1,n},...,A_{m_n,n}\in \mathcal{L}([0,1])$, that converge pointwise a.e. to $\gamma$. Let us consider
    \[ a_{i,n}=\chi_{A_{i,n}}\in E,\]
    for all $n\in \N$ and $i\in\{1,...,m_n\}$. Hence, for each $t\in [0,1]$ and $x\in \R$, we have
    \[ \beta_n(t)(x)=\sum_{i=1}^{m_n} \chi_{A_{i,n}}(x) \chi_{A_{i,n}}(t).\]
    For $n\in \N$ and $i\in\{1,...,2^n-1\}$ we set 
    \[ A_{i,n}=\left[\frac{i-1}{2^n},\frac{i}{2^n}\right)\]
    and $A_{2^n,n}=\left[\frac{2^n-1}{2^n},1\right]$. Therefore, if $t=x$, we have
    \[ \beta_n(t)(x)=\sum_{i=1}^{2^n} \chi_{A_{i,n}}(x) \chi_{A_{i,n}}(t) = 1.\]
    If $t\neq x$, then $\delta=\lvert t-x\lvert >0$. Let $m_\delta\in \N$ such that $\frac{1}{2^{m_\delta}}<\delta$, then there exists an unique $j\in\{1,...,2^{m_\delta}\}$ such that $t\in A_{j,m_\delta}$ (and in this case $x\not\in A_{j,m_\delta}$). Therefore, we have 
    \[ \beta_n(t)(x)=\sum_{i=1}^{2^n} \chi_{A_{i,n}}(x) \chi_{A_{i,n}}(t) = 0,\quad \forall n>m_\delta\]
    In consequence, we have that $(\beta_n)_n$ converges pointwise to $\gamma$. Thus, $\gamma$ is strongly-measurable.
\end{remark}

\begin{remark}\label{pointwise}
    Let $E$ be a locally convex space. For $n\in \mathbb{N}$, let $\gamma_n:[0,1]\to E$ be a sequence of Lusin-measurable functions. We denote the pointwise limit of this sequence by
    \[ \gamma:[0,1]\to E,\quad \gamma(t) = \lim_{n\to \infty} \gamma_n(t). \] 
    Then, we observe that even though each $\gamma_n$ is continuous, the pointwise limit may not be Lusin-measurable. Indeed, for each $n\in \mathbb{N}$ we define
    \[ \gamma_{n}:[0,1]\to E,\quad t\mapsto \gamma_{n}(t).\] 
    Where
    \begin{align*}
    \gamma_{n}(t)(x) &=\left\{ \begin{array}{ll}
       1-n\lvert x-t \lvert  & \text{, if } x\in \left[t-1/n, t+1/n\right]  \\
       0  & \text{, if } x\not\in \left[t-1/n, t+1/n\right] 
    \end{array}\right.\\
    &= \max \{ 1-n\lvert x-t\lvert, 0\},
    \end{align*} 
    for each $x\in \mathbb{R}.$
    It is easy to see that $(\gamma_n)_n$ is a sequence in $C([0,1],E)$. Indeed, for each $n\in \mathbb{N}$ and $x\in \mathbb{R}$ the map
    \[ \phi_{n,x}:[0,1]\to \mathbb{R},\quad \phi_{n,x}(t)=\max \{ 1-n\lvert x-t\lvert, 0\}\] 
    is continuous. We identify $E$ with $\mathbb{R}^{\mathbb{R}}$  with the product topology, and for each $x_0\in \mathbb{R}$  we denote the projection 
    \[ \pi_{x_0}: \prod_{x\in \mathbb{R}} \mathbb{R}\to \mathbb{R},\quad (y_x)_{x\in\mathbb{R}}\mapsto y_{x_0}.\]
    Then for all $x_0\in\mathbb{R}$ and $t\in[0,1]$ we have that the function
    \[ \pi_{x_0}\circ \gamma_n:[0,1]\to \mathbb{R},\quad t\mapsto \pi_{x_0}\circ \gamma_n(t) = \phi_{n,x_0}(t) \] 
    is continuous. Hence  each $\gamma_n:[0,1]\to E$ is continuous and consequently, Lusin-measurable. Moreover, this sequence converges pointwise to the function
    \[ \gamma:[0,1]\to E,\quad t\mapsto \gamma(t).\]
    Where 
    \begin{align*}
        \gamma(t)(x) &= \left\{\begin{array}{ll}
        1 & \text{, if } x=t \\
        0 & \text{, if } x\neq t,
    \end{array}\right.
    \end{align*}
    for each $x\in \mathbb{R}$, which is not Lusin-measurable.
\end{remark}

We note that, under the assumption that the sequence $(\gamma_n)_n$ is a uniformly Cauchy sequence on an appropriate compact set, we obtain the following.

\begin{definition}
    Let $E$ be a locally convex space. We say a sequence of Lusin-measurable functions $(\gamma_n)_n$ is a uniformly Cauchy sequence in a subset $M$ of $[0,1]$ if for each $\varepsilon>0$ and each continuous seminorm $q$ on $E$, there exists $n_{\varepsilon,q}\in \N$ such that 
    \[ \sup_{t\in M} q\left( \gamma_m(t)-\gamma_n(t)\right) <\varepsilon,\quad \forall n,m\geq n_{\varepsilon,q}.\]
\end{definition}

\begin{lemma}\label{lemmapointwise} 
    Let $E$ be a sequentially complete Hausdorff locally convex space and $(\gamma_n)_n$ be a sequence of Lusin-measurable functions.  Assume that for each $\varepsilon>0$, there exists a compact subset $M_\varepsilon$ of $[0,1]$ such that $\lambda([0,1]\setminus M_\varepsilon)<\varepsilon$ and $(\gamma_n|_{M_\varepsilon})_n$ is a uniformly Cauchy sequence in $M_\varepsilon.$ Then the pointwise limit of $(\gamma_n)_n$ exists a.e. Moreover, the function 
    \[ \gamma:[0,1]\to E,\quad t\mapsto \gamma(t)=\left\{\begin{array}{rl}
       \lim_{n\to \infty} \gamma_n(t),  & \text{if the limit exists.} \\
        0, &  \text{if not.}
    \end{array}\right.\]
    is Lusin-measurable.
\end{lemma}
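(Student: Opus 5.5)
We take $M_{1/j}$ for $j\in\N$ and set $M=\bigcup_j M_{1/j}$. Then $M$ is a Borel set and
\[ \lambda([0,1]\setminus M)\le \lambda([0,1]\setminus M_{1/j})<1/j \quad\text{for every } j, \]
so $\lambda([0,1]\setminus M)=0$.

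Fix $t\in M$, say $t\in M_{1/j}$. Uniform Cauchyness on $M_{1/j}$ gives, for every continuous seminorm $q$,
\[ q(\gamma_m(t)-\gamma_n(t))\le \sup_{s\in M_{1/j}} q(\gamma_m(s)-\gamma_n(s)) \to 0 . \]
Hence $(\gamma_n(t))_n$ is a Cauchy sequence in $E$. By sequential completeness it converges, and since $E$ is Hausdorff the limit is unique. So $\gamma$ is well defined, and the limit exists at least on $M$, which is a.e.

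For Lusin-measurability we fix $\varepsilon>0$. Lemma \ref{Ksequence} gives a compact $H_\varepsilon$ with $\lambda([0,1]\setminus H_\varepsilon)<\varepsilon/2$ on which every $\gamma_n|_{H_\varepsilon}$ is continuous. Let $M_{\varepsilon/2}$ be the compact set from the hypothesis and put $F_\varepsilon=H_\varepsilon\cap M_{\varepsilon/2}$. This set is compact and satisfies $\lambda([0,1]\setminus F_\varepsilon)<\varepsilon$. On $F_\varepsilon$ the functions $\gamma_n$ are continuous and uniformly Cauchy, and they converge pointwise to $\gamma$, because $F_\varepsilon\subseteq M$.

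The one step needing care is upgrading pointwise convergence to uniform convergence on $F_\varepsilon$. Fix a continuous seminorm $q$ and $\tau>0$, and take $n_{\tau,q}$ from the Cauchy condition. For $n,m\ge n_{\tau,q}$ and $t\in F_\varepsilon$ we have $q(\gamma_m(t)-\gamma_n(t))<\tau$. Now let $m\to\infty$. Since $q$ is continuous and $\gamma_m(t)\to\gamma(t)$, this gives $q(\gamma(t)-\gamma_n(t))\le\tau$ for all $t\in F_\varepsilon$ and $n\ge n_{\tau,q}$. Thus $\gamma_n|_{F_\varepsilon}\to\gamma|_{F_\varepsilon}$ uniformly with respect to each seminorm.

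A uniform limit of continuous functions into a locally convex space is continuous, by the standard $\tau/3$ argument applied seminorm by seminorm. This is the same fact already used at the end of the proof of Theorem \ref{equi}. Hence $\gamma|_{F_\varepsilon}$ is continuous, and since $\varepsilon>0$ was arbitrary, $\gamma$ is Lusin-measurable. The behaviour of $\gamma$ off $M$ (where it is set to $0$) does not matter, because $F_\varepsilon\subseteq M$.
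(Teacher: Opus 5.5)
Your proof is correct and follows essentially the paper's route: intersect the set from Lemma \ref{Ksequence} with the uniformly Cauchy set, get uniform convergence of continuous functions there, and conclude that the limit is continuous. The one difference is that you build the uniform limit by hand, from sequential completeness of $E$ plus passing to the limit in the Cauchy inequality, where the paper cites sequential completeness of $C(F_k,E)$. One small slip: $M_{\varepsilon/2}$ need not be one of the sets $M_{1/j}$, so ``$F_\varepsilon\subseteq M$'' is not literally justified. It is harmless: either use $M_{1/j}$ with $1/j<\varepsilon/2$ in place of $M_{\varepsilon/2}$, or note that your pointwise Cauchy argument works verbatim on $M_{\varepsilon/2}$, so the limit exists at every point of $F_\varepsilon$.
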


\begin{proof}
    Let $k \in \mathbb{N}$. By Lemma \ref{Ksequence}, there exists a compact subset $H_k$ of $[0,1]$ such that $\lambda([0,1]\setminus H_k )<1/2k$ and each $\gamma_n|_{H_k}$ is continuous. By hypothesis, there exists a compact subset $M_k$ of $[0,1]$ such that $\lambda([0,1]\setminus M_k )<1/2k$ and $(\gamma_n)$ is uniformly Cauchy in $M_k$, i.e., for each continuous seminorm $q$ on $E$, there exists $n_{k,q}\in \mathbb{N}$ such that
    \[ \sup_{t\in M_k} q\Big(\gamma_{m}(t)-\gamma_n(t)\Big)< \frac{1}{2k},\quad \forall m,n\geq n_{k,q}. \]
    We define the compact set $F_k=H_k\cap M_k$, then $\lambda([0,1]\setminus F_k)<1/k$ and $(\gamma_n|_{F_k})_n$ is a uniformly Cauchy sequence of continuous functions. Let us consider the Hausdorff locally convex space of continuous functions $C(F_k,E)$ with the uniform convergence topology with respect to the continuous seminorms of $E$. Since $E$ is sequentially complete, the space $C(F_k,E)$ is also sequentially complete. Then, the sequence $(\gamma_n|_{F_k})_n$ converges uniformly to a function $\alpha_k \in C(F_k,E)$. Since $E$ is a Hausdorff topological space, we observe that if $t\in F_{k_1}\cap F_{k_2}$, then $\alpha_{k_1}(t)=\alpha_{k_2}(t)$ by uniqueness of the limit of $(\gamma_n(t))_n$. We define the measurable set
    \[ F=\bigcup_{k=1}^\infty F_k.\] 
    Then 
    \[ \lambda([0,1]\setminus F) \leq \lambda([0,1]\setminus F_k) \leq \frac{1}{k},\quad \forall k\in \mathbb{N}. \] 
    Therefore $\lambda([0,1]\setminus F)=0$. We define the function
    \[ \gamma:[0,1]\to E,\quad t\mapsto \gamma(t)=\left\{ \begin{array}{rl}
       \alpha_k(t),  & \text{for any $k\in \N$ such that $t\in F_k$.}  \\
       0,  & \text{if $t\not\in F$.} 
    \end{array}\right.\]
    Then the function $\gamma:[0,1]\to E$ is well defined, it is the pointwise limit a.e. of the sequence $(\gamma_n)_n$ and is Lusin-measurable. Indeed, for each $\varepsilon>0$, there exists a $n_\varepsilon\in \mathbb{N}$ with $1/n_\varepsilon\leq \varepsilon$ such that $\lambda([0,1]\setminus F_{n_\varepsilon})<\varepsilon$ and $\gamma|_{F_{n_\varepsilon}}=\alpha_{n_\varepsilon}\in C(F_{n_\varepsilon},E)$.
\end{proof}

\begin{example}
    Let $E$ be a sequentially complete Hausdorff locally convex space and $(\gamma_n)_n$ be a Cauchy sequence in $C([0,1],E)$. If $\varepsilon>0$, then applying Lemma \ref{lemmapointwise} and assuming  $H_\varepsilon = [0,1]$, we have that the pointwise limit $\gamma$ is Lusin-measurable.
\end{example}

\begin{remark}
    We conclude this section highlighting the hierarchy of measurability for a locally convex space
    \[ \left\{ \text{Lusin-measurable functions} \right\} \subset \left\{ \text{strongly-measurable functions} \right\} \subset \left\{ \text{weakly-measurable functions} \right\}.\]
\end{remark}
\section{$L^p$-integrable functions} 

\begin{definition}
Let $E$ be a locally convex space and $1\leq p<\infty$. We define the space $\mathcal{L}^p([0,1],E)$ to be the set of all Lusin-measurable functions $\gamma : [0,1]\to E$ such that for each continuous seminorm $q$ on $E$ we have that
\[q\circ \gamma \in \mathcal{L}^p([0,1],\mathbb{R}).\] 
If $\gamma,\eta:[0,1]\to E$ are two Lusin-measurable functions, we say $\gamma \sim \eta$ if and only if $\gamma(t) = \eta(t)$ for almost all $t\in [0,1]$ and write $[\gamma]$ for the equivalence class of $\gamma$. This relation enable us to define the space
\[ L^p([0,1],E)= \mathcal{L}^p([0,1],E) / [0].\]
For each continuous seminorm $q$ of $E$, we define the seminorm
\[ \lVert [\gamma]\lVert_{\mathcal{L}^p,q}=\left( \int_0^1 q(\gamma(t))^pdt \right)^{1/p},\quad \forall [\gamma]\in L^p([0,1],E).\]
Then these seminorms define a locally convex topology for $L^p([0,1],E)$. If no confusion arises, we will denote $[\gamma]$ simply as $\gamma$. For the case $p=\infty$, using the essential supremum, it is possible to define the space $L^\infty([0,1],E)$, but we will only focus on the finite case.
\end{definition}

\begin{remark}\label{remark}
    Let $E$ be a Hausdorff locally convex space. We recall that a Lusin-measurable function $\gamma:[0,1]\to E$ verifies $\gamma(t)=0$ a.e. if, and only if, $q(\gamma(t))=0$ a.e., for each continuous seminorm $q$ on $E$ (see e.g. \cite{FPM, Nin1}). Therefore, if $E$ is a Hausdorff topological space, the space $L^p([0,1],E)$ is also Hausdorff.
\end{remark}

\begin{remark}
 In Banach settings, using the notion of strongly-measurability, the Bochner integral has been extensively studied. In locally convex setting, the previous definition cannot define a Hausdorff topological space using strongly-measurability. Indeed, let us consider the Hausdorff locally convex space $E=\{f:\R\to \R\}$ and the strongly-measurable function $\gamma:[0,1]\to E$ as in Remark \ref{example}.  Then, for each $x\in \R$, we have
\[ \lVert \gamma \lVert_{\mathcal{L}^p,q_x}^p=\int_0^1 q_x(\gamma(t))dt = \int_0^1 \delta_t(x) dt  = 0.\]
Moreover, we notice that
\[ \lambda\Big(\{t\in [0,1] : \gamma(t)\neq0_E\}\Big)=1. \] 
Therefore, the family of seminorms $\{\lVert\cdot\lVert_{\mathcal{L}^p,q_x}: x\in \R\}$ cannot separate points on the $L^p$-space constructed with strongly-measurable functions.
\end{remark}

\begin{remark}
    We recall the topological embeddings 
    \[ C([0,1],E)\hookrightarrow L^\infty([0,1],E) \hookrightarrow L^r([0,1],E)\hookrightarrow L^p([0,1],E)\hookrightarrow L^1([0,1],E),\] 
    where $1\leq p\leq r <\infty$ (see e.g. \cite{Nin1}). Moreover, in \cite{FPM}, the authors show that if $E$ is sequentially complete, the space $L^\infty([0,1],E)$ is sequentially complete. Furthermore, if $E$ is quasi-complete and fundamentally $L^p$-bounded, then $L^p([0,1],E)$ is locally complete for $1\leq p<\infty$. 
\end{remark}

\begin{theorem}
    Let $1\leq p<\infty$ and let $E$ be a locally convex space. If $\gamma\in L^p([0,1],E)$, then for each $\varepsilon>0$ and each continuous seminorm $q$, there exists a simple function $\beta_{\varepsilon,q}:[0,1]\to E$ such that 
    \[ \lVert \gamma-\beta_{\varepsilon,q}\lVert_{\mathcal{L}^p,q} < \varepsilon.\] 
 Consequently, the space of simple functions $\mathcal{S}([0,1],E)$ is dense in $L^p([0,1],E)$.
\end{theorem}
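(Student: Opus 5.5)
Fix $\gamma\in\mathcal{L}^p([0,1],E)$, a continuous seminorm $q$ and $\varepsilon>0$. The plan is to split $[0,1]$ into a compact set $K$ where $\gamma$ is continuous and bounded, and a small remainder where we put the simple function equal to $0$. The remainder is controlled by absolute continuity of the integral of the real function $q(\gamma(\cdot))^p$. On $K$ we approximate uniformly using Corollary~\ref{1.8}.

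\textbf{Step 1: the small remainder.} Since $q\circ\gamma\in\mathcal{L}^p([0,1],\mathbb{R})$, the function $q(\gamma(\cdot))^p$ is integrable. By absolute continuity of the Lebesgue integral there is $\delta>0$ such that
\[ \int_B q(\gamma(t))^p\,dt<\left(\tfrac{\varepsilon}{2}\right)^p \]
for every $B\in\mathcal{L}([0,1])$ with $\lambda(B)<\delta$.

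\textbf{Step 2: uniform approximation on $K$.} Set $\eta=\min\{\delta,\varepsilon/2\}$. Apply Corollary~\ref{1.8} with $\eta$ in place of $\varepsilon$. This gives a compact $K$ with $\lambda([0,1]\setminus K)<\eta$, $\gamma|_K$ continuous, and a simple function $\beta$ with
\[ \sup_{t\in K}q(\gamma(t)-\beta(t))<\eta. \]
Define $\beta_{\varepsilon,q}=\beta\chi_K$. It is simple: if $\beta=\sum y_i\chi_{A_i}$, then $\beta\chi_K=\sum y_i\chi_{A_i\cap K}$, and each $A_i\cap K$ is Lebesgue measurable. It also lies in $\mathcal{L}^p$, since $q$ of a simple function is a bounded measurable real function.

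\textbf{Step 3: the estimate.} Use Minkowski's inequality for the real functions $q(\gamma-\beta_{\varepsilon,q})\chi_K$ and $q(\gamma)\chi_{[0,1]\setminus K}$. This gives
\[ \lVert\gamma-\beta_{\varepsilon,q}\rVert_{\mathcal{L}^p,q}\le\Big(\int_K q(\gamma-\beta)^p\Big)^{1/p}+\Big(\int_{[0,1]\setminus K}q(\gamma)^p\Big)^{1/p}. \]
The first term is at most $\eta\,\lambda(K)^{1/p}\le\varepsilon/2$. The second term is less than $\varepsilon/2$ by Step 1, because $\lambda([0,1]\setminus K)<\delta$. Strictness of the total bound follows from the strict inequality in Step 1.

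\textbf{Step 4: density.} The locally convex topology on $L^p([0,1],E)$ is generated by the seminorms $\lVert\cdot\rVert_{\mathcal{L}^p,q}$. A basic neighbourhood of $\gamma$ is therefore given by finitely many seminorms $q_1,\dots,q_m$ and one radius $\varepsilon$. Replace them by the single continuous seminorm $q=\max_j q_j$, or by $q=\sum_j q_j$. Since $q_j\le q$ for each $j$, we get $\lVert\cdot\rVert_{\mathcal{L}^p,q_j}\le\lVert\cdot\rVert_{\mathcal{L}^p,q}$. So the simple function produced for $q$ lies in the given neighbourhood, and $\mathcal{S}([0,1],E)$ is dense.

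\textbf{Main obstacle.} The main difficulty is that $\gamma$ may be unbounded in $q$ off $K$. We handle this by setting the approximant to zero there and invoking absolute continuity, rather than approximating everywhere. The remaining bookkeeping only concerns measurability of $q\circ(\gamma-\beta\chi_K)$. This holds because $\gamma-\beta\chi_K$ is Lusin-measurable by the vector-space remarks above and Lemma~\ref{simple}.
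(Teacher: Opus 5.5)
Your proof is correct and follows essentially the same route as the paper. You use absolute continuity of $\int q(\gamma)^p$ to control the small exceptional set, Corollary~\ref{1.8} for uniform approximation on the compact set $K$ with the approximant set to zero off $K$, and the single seminorm $\max_j q_j$ for density. The differences are only cosmetic: you combine the two pieces with Minkowski and $\varepsilon/2$ rather than splitting the $p$-th power integral with $\varepsilon^p/2$, and you make the paper's ``without loss of generality $\beta=0$ off $K$'' explicit by passing to $\beta\chi_K$.
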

\begin{proof}
    Let $q$ be a continuous seminorm of $E$ and $\varepsilon>0$. By absolute continuity of the integral, there exists $\delta=\delta_{\gamma,\varepsilon,q}>0$ such that for every measurable set $A\in \mathcal{L}([0,1])$
    \[ \lambda(A)<\delta \implies \int_A q(\gamma(t))^pdt< \frac{\varepsilon^p}{2}.\] 
    Moreover, by Corollary \ref{1.8}, for $\theta_\varepsilon=\min\left\{\delta,\varepsilon/\sqrt[p]{2}\right\}$ there exists a compact subset $K_{\varepsilon}$ of $[0,1]$ and a simple function $\beta_{\varepsilon,q}:[0,1]\to E$ such that $\lambda([0,1]\setminus K_\varepsilon)<\theta_\varepsilon$, the function $\gamma|_{K_\varepsilon}$ is continuous and
    \[ \sup_{t\in K_\varepsilon} q(\gamma(t)-\beta_{\varepsilon,q}(t))<\theta_\varepsilon \leq\frac{\varepsilon}{\sqrt[p]{2}}\] 
    Without loss of generality, we suppose that $\beta_{\varepsilon,q}(t)=0$ for all $t\in [0,1]\setminus K_\varepsilon$. Then 
    \begin{align*}
        \lVert \gamma-\beta_{\varepsilon,q}\lVert_{\mathcal{L}^p,q}^p &= \int_0^1 q(\gamma(t)-\beta_{\varepsilon,q}(t))^p dt \\
        & = \int_{K_\varepsilon}q(\gamma(t)-\beta_{\varepsilon,q}(t))^p dt + \int_{[0,1]\setminus K_\varepsilon} q(\gamma(t)-\beta_{\varepsilon,q}(t))^pdt\\
        &\leq \sup_{t\in K_\varepsilon} q(\gamma(t)-\beta_{\varepsilon,q}(t))^p\lambda(K_{\varepsilon}) + \int_{[0,1]\setminus K_\varepsilon} q(\gamma(t))^p dt\\
        & \leq \frac{\varepsilon^p}{2}+\frac{\varepsilon^p}{2} = \varepsilon^p.
    \end{align*}
    Hence $ \lVert \gamma-\beta_{\varepsilon,q}\lVert_{\mathcal{L}^p,q} < \varepsilon$. Let $\gamma\in L^p([0,1],E).$ Then a basic open $\gamma$-neighborhood is a set of the form
    \[ V_\gamma = \gamma+\bigcap_{i=1}^n \lVert \cdot \lVert_{\mathcal{L}^p,q_i}^{-1}\left([0,\varepsilon)\right)\] 
 for the continuous seminorms $\varepsilon>0$ and $q_1,...,q_n$ of $E$. \\
 Next, we define the continuous seminorm $q:E\to [0,\infty)$ by
    \[ q(x)=\max\{q_1(x),...,q_n(x)\},\quad \forall x\in E\] 
    Then, there exists $\beta_{\varepsilon,q}\in \mathcal{S}([0,1],E)$ such that 
    \[ \lVert \gamma-\beta_{\varepsilon,q}\lVert_{\mathcal{L}^p,q}<\varepsilon.\]
    Therefore, for each $i\in \{1,...,n\}$, we have
    \[ \lVert \gamma-\beta_{\varepsilon,q}\lVert_{\mathcal{L}^p,q_i} \leq \lVert \gamma-\beta_{\varepsilon,q} \lVert_{\mathcal{L}^p,q}< \varepsilon.\] 
    Hence $\beta_{\varepsilon,q}\in V_\gamma$ and $\mathcal{S}([0,1],E)$ is dense.
\end{proof}

We recall \cite[Lemma 2.12]{Rud}.
\begin{lemma}[Urysohn's Lemma]
Let $X$ be a locally compact Hausdorff space. Suppose that $V\subseteq X$ is open and $K\subseteq V$ compact. Then there exists a continuous function with compact support $f:X\to \mathbb{R}$ such that 
\[ \chi_K \leq f \leq \chi_V.\]
\end{lemma}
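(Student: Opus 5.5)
This is the classical Urysohn Lemma for locally compact Hausdorff spaces, and I would follow the standard dyadic (or rational) construction. The basic tool is a \emph{shrinking step}: if $K\subseteq U$ with $K$ compact and $U$ open in $X$, then there is an open set $W$ with compact closure such that $K\subseteq W\subseteq \overline{W}\subseteq U$.

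To prove the shrinking step, I would proceed as follows.
\begin{itemize}
\item Each point of $K$ has an open neighbourhood with compact closure, by local compactness. Finitely many of these cover $K$, so there is an open set $G\supseteq K$ with $\overline{G}$ compact.
\item If $U=X$, take $W=G$. Otherwise let $C=X\setminus U$, which is closed.
\item For each $p\in C$, use the Hausdorff property and compactness of $K$ to find an open set $W_p\supseteq K$ with $p\notin\overline{W_p}$.
\item The compact sets $C\cap\overline{G}\cap\overline{W_p}$, for $p\in C$, have empty intersection. Hence finitely many of them already have empty intersection.
\item Set $W=G\cap W_{p_1}\cap\cdots\cap W_{p_m}$. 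Then $\overline{W}$ is compact and disjoint from $C$, so $\overline{W}\subseteq U$.
\end{itemize}

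Next I would build the family of open sets. Enumerate the rationals in $(0,1)$ as $r_3,r_4,\dots$, and put $r_1=0$, $r_2=1$. Apply the shrinking step to get $V_0$ with $K\subseteq V_0\subseteq\overline{V_0}\subseteq V$ and $\overline{V_0}$ compact. Apply it again to get $V_1$ with $K\subseteq V_1\subseteq\overline{V_1}\subseteq V_0$.

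Proceed inductively. Suppose $V_{r_1},\dots,V_{r_n}$ have been chosen with $\overline{V_s}\subseteq V_r$ whenever $r<s$. Let $r_i$ be the largest of $r_1,\dots,r_n$ below $r_{n+1}$, and $r_j$ the smallest above it. Then $\overline{V_{r_j}}$ is a compact subset of the open set $V_{r_i}$. The shrinking step yields $V_{r_{n+1}}$ with
\[
\overline{V_{r_j}}\subseteq V_{r_{n+1}}\subseteq\overline{V_{r_{n+1}}}\subseteq V_{r_i}.
\]
This gives a family $(V_r)_{r\in\mathbb{Q}\cap[0,1]}$ such that
\begin{itemize}
\item $K\subseteq V_1$;
\item $\overline{V_0}\subseteq V$ is compact;
\item $\overline{V_s}\subseteq V_r$ whenever $r<s$.
\end{itemize}

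Finally I would define the function. Let $f_r=r\chi_{V_r}$ and $g_s=1+(s-1)\chi_{\overline{V_s}}$ (with $g_s=1$ off $\overline{V_s}$), and set $f=\sup_r f_r$ and $g=\inf_s g_s$.
\begin{itemize}
\item Each $f_r$ is lower semicontinuous and each $g_s$ is upper semicontinuous. Hence $f$ is lower semicontinuous and $g$ is upper semicontinuous.
\item We have $f=1$ on $K$, $0\le f\le 1$, and $\operatorname{supp} f\subseteq\overline{V_0}$. So $f$ has compact support and $\chi_K\le f\le\chi_V$.
\item To show $f=g$: if $f_r(x)>g_s(x)$, then $r>s$, $x\in V_r$ and $x\notin\overline{V_s}$, which contradicts $V_r\subseteq V_s$. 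Hence $f\le g$.
\item If $f(x)<g(x)$, choose rationals $r,s$ with $f(x)<r<s<g(x)$. Then $x\notin V_r$ and $x\in\overline{V_s}\subseteq V_r$, a contradiction. Hence $f=g$.
\end{itemize}
So $f$ is both lower and upper semicontinuous, hence continuous.

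I expect the main obstacle to be the shrinking step, because it is the only place where local compactness and the Hausdorff property are really used. I would prove it first, via the finite-intersection argument above.
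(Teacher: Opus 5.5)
The paper does not prove this lemma; it quotes Lemma 2.12 of Rudin's \emph{Real and Complex Analysis}. Your outline is exactly Rudin's proof: the shrinking step (Rudin's Theorem 2.7), then the nested family $(V_r)$ indexed by the rationals, then the supremum and infimum of semicontinuous step functions. Your shrinking step and your construction of $(V_r)$ are correct.

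There is, however, a concrete error in the last stage: your formula for $g_s$ is reversed.
\begin{itemize}
\item As you define it, $g_s=1+(s-1)\chi_{\overline{V_s}}$ equals $s$ on $\overline{V_s}$ and $1$ off it.
\item Since $s-1\le 0$ and $\chi_{\overline{V_s}}$ is upper semicontinuous, this $g_s$ is \emph{lower} semicontinuous, not upper. For $s<a\le 1$ the set $\{g_s<a\}=\overline{V_s}$ is closed and in general not open.
\item Worse, $g_0=1-\chi_{\overline{V_0}}$, so $g=\inf_s g_s$ vanishes on $\overline{V_0}\supseteq K$. But $f=1$ on $K$, so $f\le g$ and $f=g$ are both false for this $g$.
\end{itemize}

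Your two verification steps in fact use the opposite convention. In the step for $f\le g$, you pass from $g_s(x)=s<1$ to ``$x\notin\overline{V_s}$''. In the step for $f\ge g$, you pass from $g_s(x)>s$ to ``$x\in\overline{V_s}$''.

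The fix is to define $g_s=s+(1-s)\chi_{\overline{V_s}}$, that is, $g_s=1$ on $\overline{V_s}$ and $g_s=s$ elsewhere, which is Rudin's definition. This $g_s$ is upper semicontinuous because $1-s\ge 0$ and $\overline{V_s}$ is closed, and it satisfies $g_s\le 1$. With it, both of your arguments go through word for word. They give $f=g$, hence $f$ is continuous, with $\chi_K\le f\le \chi_V$ and support contained in the compact set $\overline{V_0}$.
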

\begin{theorem}\label{2.11}
   Let $1\leq p<\infty$ and $E$ be a Hausdorff locally convex space. Then the space of continuous functions $C([0,1],E)$ is dense in $L^p([0,1],E)$.
\end{theorem}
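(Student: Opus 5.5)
By the preceding density theorem, simple functions are dense in $L^p([0,1],E)$, so it suffices to approximate a single simple function by continuous ones. I will reduce this to approximating $y_0\chi_A$ for $A\in\mathcal{L}([0,1])$ and $y_0\in E$. Fix a basic neighborhood of $\gamma$, determined by $\varepsilon>0$ and continuous seminorms $q_1,\dots,q_n$, and set $q=\max_i q_i$ as in the previous proof. It is then enough to find $\eta\in C([0,1],E)$ with $\lVert\gamma-\eta\rVert_{\mathcal{L}^p,q}<\varepsilon$. First choose a simple $\beta=\sum_{i=1}^m y_i\chi_{A_i}$ with $\lVert\gamma-\beta\rVert_{\mathcal{L}^p,q}<\varepsilon/2$. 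By the triangle inequality for $\lVert\cdot\rVert_{\mathcal{L}^p,q}$ (Minkowski applied to $q\circ(\cdot)$), it then suffices to find, for each $i$, a continuous $\eta_i$ with $\lVert y_i\chi_{A_i}-\eta_i\rVert_{\mathcal{L}^p,q}<\varepsilon/(2m)$, and to take $\eta=\sum_i\eta_i$.

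For a single term $y_0\chi_A$, use regularity of Lebesgue measure on $[0,1]$. Choose a compact $K\subseteq A$ and a relatively open $V\supseteq A$ in $[0,1]$ with $\lambda(V\setminus K)<\delta$, where $\delta$ is to be fixed. Since $[0,1]$ is compact Hausdorff, Urysohn's Lemma gives a continuous $f:[0,1]\to\mathbb{R}$ with $\chi_K\le f\le\chi_V$. Define $\eta_0(t)=f(t)y_0$; it is continuous because scalar multiplication $\mathbb{R}\to E$, $s\mapsto sy_0$, is continuous. Pointwise,
\[ q\bigl(y_0\chi_A(t)-f(t)y_0\bigr)=\lvert\chi_A(t)-f(t)\rvert\,q(y_0)\le q(y_0)\chi_{V\setminus K}(t), \]
because $\chi_A$ and $f$ both equal $1$ on $K$, both vanish off $V$, and both take values in $[0,1]$. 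Hence
\[ \lVert y_0\chi_A-\eta_0\rVert_{\mathcal{L}^p,q}\le q(y_0)\lambda(V\setminus K)^{1/p}<q(y_0)\delta^{1/p}, \]
and this is small once $\delta$ is chosen with $q(y_0)\delta^{1/p}<\varepsilon/(2m)$. If $q(y_0)=0$, any $\delta$ works.

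It remains to check that continuous functions lie in $\mathcal{L}^p$, which holds since $q\circ\eta$ is continuous and hence bounded on $[0,1]$. The final density conclusion then follows exactly as in the preceding theorem, via the seminorm $q=\max_i q_i$. I do not expect a serious obstacle. The only delicate point is applying Urysohn on $[0,1]$ with $V$ open relative to $[0,1]$, which is fine since $[0,1]$ is itself locally compact Hausdorff. The Hausdorff hypothesis on $E$ is not really used here; it only ensures, via Remark \ref{remark}, that $L^p([0,1],E)$ is Hausdorff, so that density is meant in a separated space.
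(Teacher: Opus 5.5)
Your proposal is correct and follows essentially the same route as the paper: reduce to $y_0\chi_A$ via density of simple functions, use inner/outer regularity to get compact $K\subseteq A\subseteq V$ with $V$ open, apply Urysohn, and bound $\lVert y_0\chi_A - f y_0\rVert_{\mathcal{L}^p,q}^p \le q(y_0)^p\lambda(V\setminus K)$. The only difference is that you spell out the passage from single terms $y_i\chi_{A_i}$ to a general simple function via the triangle inequality and $q=\max_i q_i$, which the paper leaves implicit.
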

\begin{proof}
    Let $y_0\in E$ and $A\in\mathcal{L}([0,1])$. Since $\mathcal{S}([0,1],E)$ is dense in $L^p([0,1],E)$. Then it suffices to show that the simple function $y_0\chi_A:[0,1]\to E$ belongs to the closure of $C([0,1],E)$.\\
    In fact,
    \begin{align*}
        \lambda(A)&=\sup \{\lambda(K) : \text{$K$ is compact and $K\subseteq A$}\}\\
        &= \inf \{ \lambda(U) : \text{$U$ is open and $A\subseteq U$}\}.
    \end{align*}
    Then there exists a sequence of compact sets $(K_n)_n$ and open sets $(U_n)_n$ such that for each $n\in \mathbb{N}$, we have that $K_n\subseteq A\subseteq U_n$ and
    \[ \lambda(A\setminus K_n) <\frac{1}{2n}\quad \land \quad \lambda(U_n\setminus A)<\frac{1}{2n}.\]
    By Urysohn's Lemma, there exists a continuous function $f_n:[0,1]\to \mathbb{R}$ such that 
    \[  \chi_{K_n}\leq f_n\leq \chi_{U_n}. \] 
    Then the function 
    \[ \alpha_n:[0,1]\to E,\quad \alpha_n(t)=f_n(t) y_0, \] 
    is in $C([0,1],E)$. \\
    We see that if $t\in K_n$ or $t\in [0,1]\setminus U_n$ then $(\alpha_n - y_0\chi_{A})(t)=0$. Now, since 
    \[U_n\setminus K_n = (U_n\setminus A)\cup (A\setminus K_n).\]
    Therefore, for each continuous seminorm $q$ of $E$ we have that
    \begin{align*}
        \lVert \alpha_n-y_0\chi_A \lVert_{\mathcal{L}^p,q}^p &= \int_0^1 q( \alpha_n(t)-y_0\chi_A(t))^pdt  \\
        &= \int_{U_n\setminus K_n} q(f_n(t) y_0 - y_0 \chi_A(t))^p dt \\
        & = \int_{U_n\setminus A} q(f_n(t) y_0 - y_0 \chi_A(t))^p dt + \int_{A\setminus K_n} q(f_n(t) y_0 - y_0 \chi_A(t))^p dt \\
        &=  q( y_0 )^p\left( \int_{U_n\setminus A} \lvert f_n(t)\lvert^p  dt +  \int_{A\setminus K_n} \lvert f_n(t) -1\lvert^p dt\right) \\
        &\leq q(y_0)^p\left( \lambda(U_n\setminus A) + \lambda(A\setminus K_n) \right) \\
        &\leq q(y_0)^p \left(\frac{1}{2n}+ \frac{1}{2n}\right) = \frac{q(y_0)^p}{n}.
    \end{align*}
    Hence $\alpha_n \to y_0\chi_A$ as $n\to \infty$ in $L^p([0,1],E)$.
\end{proof}

\begin{remark}
    Let us consider the locally convex space $E=\{f:\mathbb{R}\to \mathbb{R}\}$ and the sequence $(\gamma_n)_n$ given in Remark \ref{pointwise}. Then  $(\gamma_n)_n$ is in $L^p([0,1],E)$. We claim that this sequence converges in the  $L^p([0,1],E)$ norm. In fact, let be  $0_e:\mathbb{R}\to \mathbb{R}$ and $0_E:[0,1]\to E$ be the respective constants maps $0.$  It is clearly that  $0_E\in L^p([0,1],E).$ Moreover,  for $x_0\in \mathbb{R}$ fixed we have that,
    \begin{align*}
        \lVert \gamma_n  - 0_E\lVert_{\mathcal{L}^p,q_{x_0}}^p &= \int_0^1 (q_{x_0} ( \gamma_n (t) ))^p dt \\
        &= \int_0^1 (\gamma_n (t)(x_0))^p dt\\
        &= \int_0^1 \max \{ 1-n\lvert x_0-t\lvert, 0\}^p dt\\
        &= \int_{\left[x_0-\frac{1}{n},x_0+\frac{1}{n}\right]\cap [0,1]} \left( 1-n\lvert x_0 -t \lvert\right)^p dt \\
        &\leq \int_{\left[x_0-\frac{1}{n},x_0+\frac{1}{n}\right]\cap [0,1]} 1 dt  \\
        &\leq \frac{2}{n}.
    \end{align*}
    Hence $\gamma_n \to 0_E$ in $L^p([0,1],E)$.
    On the other hand, let $\gamma:[0,1]\to E$ be the pointwise limit of the sequence $(\gamma_n)_n$ (see Remark \ref{pointwise}). Since
    \[ \lambda\Big(\{t\in [0,1] : \gamma(t)\neq0_E(t)\}\Big)=1, \] 
    we see that the pointwise limit is not equal a.e. to the limit in $L^p([0,1],E)$.
   Hence for functions with values in a non-metrizable locally convex space, the convergence in $L^p$ does not imply the existence of a subsequence which converges to the pointwise limit.  This shows that functions taking values in a non-metrizable space may exhibit behaviors that do not occur in the metrizable setting.
    
\end{remark}

\begin{definition}
    Let $E$ be a topological vector space and $\gamma:[0,1]\to E$ be a function. Then
    \begin{itemize}
        \item[a)] We say $\gamma$ is weak integrable if it is weak-measurable and for each $f\in E'$, we have 
        \[ f\circ \gamma\in \mathcal{L}^1([0,1]).\]
        \item[b)] We say $\gamma$ is Pettis integrable if it is weak integrable and there exists $z\in E$ such that 
    \[ f(z) = \int_0^1 f(\gamma(t))dt,\quad \forall f\in E'.\] 
    we denote this element by $z=\int_0^1 \gamma(t)dt$.
    \end{itemize}
\end{definition}
For properties of the Pettis integral on locally convex spaces we refer the reader to \cite{Ali, Chat, Pettis, Tho}.

\begin{remark}
    By the Hahn-Banach theorem, if $E$ is a Hausdorff locally convex space, then its topological dual $E'$ separates points. Hence, if $\gamma:[0,1]\to E$ is Pettis integrable, the element $\int_0^1\gamma(t)dt$ is necessarily unique.
\end{remark}
The following result (\cite[Proposition 2.26]{Nin1}) shows a sufficient condition for the existence of the Pettis integral for $L^1$-integrable functions.

\begin{theorem}\label{2weak}
    Let $E$ be a sequentially complete Hausdorff locally convex space. Then each function $\gamma\in \mathcal{L}^1([0,1],E)$ is Pettis integrable. Furthermore, the function
    \[ \eta:[0,1]\to E,\quad \eta(t)=\int_0^t \gamma(s)ds\] 
    is continuous.
\end{theorem}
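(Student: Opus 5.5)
The plan is to build the Pettis integral as a limit of integrals of simple functions, using the seminorm structure of $\mathcal{L}^1([0,1],E)$ and sequential completeness of $E$. For a simple function $\beta=\sum_{i=1}^n y_i\chi_{A_i}$ with the $A_i$ disjoint, set $\int_0^1\beta=\sum_i \lambda(A_i)y_i$. This element is clearly a Pettis integral of $\beta$, and it satisfies $q\big(\int_0^1\beta\big)\le \lVert\beta\rVert_{\mathcal{L}^1,q}$ for every continuous seminorm $q$. The same holds for $\beta\chi_{[0,t]}$, which is again simple.

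\textbf{Approximating sequence.} Given $\gamma\in\mathcal{L}^1([0,1],E)$, for each $k$ I would use Theorem \ref{equi} or Corollary \ref{1.8} to get a compact $K_k$ with $\lambda([0,1]\setminus K_k)<2^{-k}$ and $\gamma|_{K_k}$ continuous. This is the main obstacle: the simple functions from Corollary \ref{1.8} depend on the seminorm $q$, and in a nonmetrizable $E$ there is no countable cofinal family of seminorms. So I would avoid choosing per seminorm and use a net instead. Index the net by pairs $(\varepsilon,q)$, ordered by decreasing $\varepsilon$ and increasing $q$, and let $\beta_{\varepsilon,q}$ be the simple function from the density theorem for simple functions. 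Then $z_{\varepsilon,q}=\int_0^1\beta_{\varepsilon,q}$ is Cauchy: for $(\varepsilon',q')\ge(\varepsilon,q)$,
\[ q(z_{\varepsilon,q}-z_{\varepsilon',q'})\le \lVert\beta_{\varepsilon,q}-\gamma\rVert_{\mathcal{L}^1,q}+\lVert\gamma-\beta_{\varepsilon',q'}\rVert_{\mathcal{L}^1,q'}<2\varepsilon. \]
Sequential completeness does not obviously give limits of nets. I would instead work on a fixed compact $K_k$ with one partition-based sequence $\beta_n$, as in the proof of Theorem \ref{equi}. Those $\beta_n$ converge uniformly on $K_k$ with respect to \emph{every} seminorm at once, so they do not depend on $q$. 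Multiplying by $\chi_{K_k}$, the integrals $\int\beta_n\chi_{K_k}$ form a Cauchy \emph{sequence} in $E$ (by uniform convergence and $\lambda\le1$), hence converge to some $z_k\in E$.

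\textbf{Passing to $[0,1]$.} Next I would take the $K_k$ increasing, replacing $K_k$ by $K_1\cup\dots\cup K_k$; the partition argument only needs uniform continuity on a compact set, and $\gamma$ restricted to a finite union of compacts on each of which it is continuous is continuous. Then $q(z_k-z_l)\le\int_{K_l\setminus K_k}q(\gamma)$, which tends to zero by absolute continuity of $\int q\circ\gamma$ because $\lambda(K_l\setminus K_k)\le 2^{-k}$. Although this rate depends on $q$, it still makes $(z_k)_k$ a Cauchy sequence, since Cauchy only requires each $q$ separately. Hence $z_k\to z$. For $f\in E'$, $f$ is continuous and linear, so $f(z_k)=\lim_n\int f\circ\beta_n\chi_{K_k}=\int_{K_k}f\circ\gamma$. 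Here I would use dominated convergence, since $f\circ\beta_n\to f\circ\gamma$ uniformly on $K_k$; and $\gamma$ is weakly integrable because $|f|\le Cq$ for some $q$. Letting $k\to\infty$ gives $f(z)=\int_0^1 f\circ\gamma$. Uniqueness comes from the Remark after the definition of Pettis integrability.

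\textbf{Continuity.} Running the same construction for $\gamma\chi_{[0,t]}$ gives $\eta(t)$. The bound $q(\int\beta)\le\lVert\beta\rVert_{\mathcal{L}^1,q}$ passes to the limit, which gives
\[ q(\eta(t)-\eta(s))\le\int_s^t q(\gamma(u))\,du \quad\text{for } s<t. \]
The right-hand side tends to $0$ as $|t-s|\to0$ by absolute continuity of the integral of the $L^1$ function $q\circ\gamma$. This yields continuity of $\eta$.
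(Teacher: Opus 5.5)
The paper does not prove this statement. It quotes it from \cite[Proposition 2.26]{Nin1}, so there is no in-paper argument to compare against, and your proposal has to stand on its own. It does: it is a correct, self-contained proof. It uses only tools already in the paper, namely the partition construction from the proof of Theorem~\ref{equi} and the Hahn--Banach estimate of Lemma~\ref{HBTHEOREM}.

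The key observation is yours. The approximants from Corollary~\ref{1.8} and from the density theorem depend on $q$, so they would only give a Cauchy \emph{net} of integrals, and sequential completeness cannot handle nets. On a fixed compact $K_k$ with $\gamma|_{K_k}$ continuous, however, the partition sequence converges uniformly for all seminorms at once. Hence both limits you take ($n\to\infty$, then $k\to\infty$) are limits of Cauchy \emph{sequences}. This also shows why sequential completeness, rather than full completeness, suffices, which a bare citation leaves hidden.

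Two steps each need one explicit line.

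First, the estimate $q(z_l-z_k)\le\int_{K_l\setminus K_k}q(\gamma)$ for $l>k$ is asserted but not derived. You can get it in either of two ways:
\begin{itemize}
\item pass to the limit in $\int q(\beta^{(l)}_n-\beta^{(k)}_n)$, splitting the integral over $K_k$ and $K_l\setminus K_k$ and using uniform convergence on $K_l$ and on $K_k$; or
\item first establish $f(z_k)=\int_{K_k}f\circ\gamma$ for all $f\in E'$, then take the supremum over $\lvert f\rvert\le q$ as in Lemma~\ref{HBTHEOREM}.
\end{itemize}
Note that in your write-up this estimate comes before the identity $f(z_k)=\int_{K_k} f\circ\gamma$, so the first route is the one matching your order.

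Second, in the continuity step you need $\eta(t)-\eta(s)$ to be the Pettis integral of $\gamma\chi_{(s,t]}$. This follows from linearity of $f\mapsto\int_0^1 f\circ\gamma$ together with uniqueness of Pettis integrals in the Hausdorff space $E$. Once that is stated, $q(\eta(t)-\eta(s))\le\int_s^t q(\gamma(u))\,du$ is just Lemma~\ref{HBTHEOREM} applied to $\gamma\chi_{(s,t]}$, which again lies in $\mathcal{L}^1([0,1],E)$. So you do not need to re-run the construction for $\gamma\chi_{[0,t]}$, although your claim that the bound passes to the limit is also valid.
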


We recall \cite[Exercise 1.2.16]{TTa}.

\begin{lemma}
    For each $n\in \mathbb{N}$, we denote the dyadic partition $\pi_n$ of $[0,1],$ that is, 
    \[\pi_n= \left\{ \left[\frac{k-1}{2^n},\frac{k}{2^n}\right] : k\in\{1,....,2^n\}\right\}.\]
    Let $A\in \mathcal{L}([0,1])$. Then for each $\varepsilon>0$, there exists an $n_\varepsilon\in \mathbb{N}$ and a finite collection $I_1,...,I_k\in \pi_{n_\varepsilon}$, such that 
    \[ \lambda\left(A\triangle\bigcup_{j=1}^k I_j\right)<\varepsilon,\]
    where $\triangle$ denotes the symmetric difference.
\end{lemma}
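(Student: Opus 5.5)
We prove the statement by approximating $A$ first by an open set and then by a compact set, and finally by a finite union of dyadic intervals. Fix $\varepsilon>0$. By outer regularity of the Lebesgue measure there is an open set $U\supseteq A$, relatively open in $[0,1]$, with $\lambda(U\setminus A)<\varepsilon/3$. Every relatively open subset of $[0,1]$ is a countable disjoint union of relatively open intervals $(J_i)_i$, so $\lambda(U)=\sum_i\lambda(J_i)<\infty$. Choose $N$ with $\sum_{i>N}\lambda(J_i)<\varepsilon/3$ and put $V=\bigcup_{i=1}^N J_i$. Then $V\subseteq U$, $\lambda(U\setminus V)<\varepsilon/3$, and
\[ \lambda(A\triangle V)\leq \lambda(U\setminus A)+\lambda(U\setminus V)<\frac{2\varepsilon}{3}, \]
since $A\setminus V\subseteq U\setminus V$ and $V\setminus A\subseteq U\setminus A$.

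Next we replace each of the finitely many intervals $J_1,\dots,J_N$ by a union of dyadic intervals from one common partition $\pi_n$. For an interval $J$ with endpoints $a\le b$, let $D_n(J)$ be the union of those $I\in\pi_n$ with $I\subseteq \overline{J}$. Then $D_n(J)$ and $J$ differ only inside the two dyadic intervals containing $a$ and $b$, so $\lambda(J\triangle D_n(J))\leq 2\cdot 2^{-n}$. Choose $n_\varepsilon$ with $2N\cdot 2^{-n_\varepsilon}<\varepsilon/3$ and let $I_1,\dots,I_k$ be the intervals of $\pi_{n_\varepsilon}$ appearing in $D_{n_\varepsilon}(J_1)\cup\dots\cup D_{n_\varepsilon}(J_N)$, with repetitions removed. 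Using $(\bigcup_i X_i)\triangle(\bigcup_i Y_i)\subseteq\bigcup_i (X_i\triangle Y_i)$, we get
\[ \lambda\Big(V\triangle \bigcup_{j=1}^k I_j\Big)\leq \sum_{i=1}^N \lambda\big(J_i\triangle D_{n_\varepsilon}(J_i)\big)<\frac{\varepsilon}{3}. \]

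The symmetric difference satisfies the triangle inequality $\lambda(X\triangle Z)\leq\lambda(X\triangle Y)+\lambda(Y\triangle Z)$, because $X\triangle Z\subseteq (X\triangle Y)\cup(Y\triangle Z)$. Applying this with $X=A$, $Y=V$ and $Z=\bigcup_{j=1}^k I_j$ gives $\lambda\big(A\triangle\bigcup_{j=1}^k I_j\big)<\varepsilon$, as required.

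If $A$ has measure zero or $V$ contains no full dyadic interval, some of the unions above are empty. This is harmless: the estimates still hold when the collection of dyadic intervals is empty.
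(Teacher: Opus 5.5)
Your proof is correct. The paper does not prove this lemma; it only recalls it from \cite[Exercise 1.2.16]{TTa}, so there is no in-paper argument to compare with. Your route is the standard solution of that exercise, and all the estimates check out. You use outer regularity, then truncate the countably many component intervals of $U$ to finitely many. You then approximate each remaining interval from inside by dyadic intervals of a common level $n$, with error at most $2\cdot 2^{-n}$ each. Finally you glue the pieces together with the inclusions $X\triangle Z\subseteq (X\triangle Y)\cup(Y\triangle Z)$ and $(\bigcup X_i)\triangle(\bigcup Y_i)\subseteq\bigcup(X_i\triangle Y_i)$.

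The one sentence worth tightening is ``the two dyadic intervals containing $a$ and $b$''. A dyadic point lies in two intervals of $\pi_n$, so that phrase is ambiguous; the bound itself is right. Instead, set $a'=\lceil 2^n a\rceil 2^{-n}$ and $b'=\lfloor 2^n b\rfloor 2^{-n}$.
\begin{itemize}
\item If $b'-a'\geq 2^{-n}$, then $D_n(J)=[a',b']$ and $J\setminus D_n(J)\subseteq[a,a')\cup(b',b]$.
\item Otherwise $D_n(J)=\emptyset$ and $\lambda(J)=b-a<(b'-a')+2\cdot 2^{-n}\leq 2\cdot 2^{-n}$.
\end{itemize}
In both cases $\lambda(J\triangle D_n(J))\leq 2\cdot 2^{-n}$.
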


\begin{remark}\label{absctint}
    Let $\gamma\in L^p([0,1],E)$. By the previous lemma and the absolute continuity of the integral, for each $\varepsilon>0$ and each $A\in \mathcal{L}([0,1])$, there exists $n_{\varepsilon,A}\in \mathbb{N}$ for which there exists a finite collection $I_1,...,I_k\in \pi_{n_{\varepsilon,A}}$ satisfying
\[ \int_{A\triangle\bigcup_{j=1}^k I_j} q(\gamma(t))^p dt < \varepsilon.\] 
\end{remark}

\begin{lemma}\label{HBTHEOREM}
    Let $E$ be a locally convex space and $\gamma:[0,1]\to E$ be a  Pettis integrable function. Then, for each continuous seminorm $q$ of $E$ we have that
    \[ q\left( \int_0^1 \gamma(t)dt \right) \leq \int_0^1 q(\gamma(t))dt. \] 
\end{lemma}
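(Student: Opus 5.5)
Let $z=\int_0^1\gamma(t)\,dt$. The plan is a Hahn--Banach argument. First I handle the degenerate case $q(z)=0$: here the inequality is trivial, because the right-hand side is nonnegative. I interpret $\int_0^1 q(\gamma(t))\,dt$ as the (possibly infinite) Lebesgue integral of the nonnegative function $q\circ\gamma$. If $q\circ\gamma$ is not measurable, I use the upper integral instead; in the setting of the paper, $\gamma$ is Lusin-measurable, so $q\circ\gamma$ is Lusin-measurable and hence Lebesgue-measurable. If that integral is $+\infty$, there is also nothing to prove. So I may assume $q(z)>0$ and that $\int_0^1 q(\gamma(t))\,dt<\infty$.

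The key step is to produce a functional $f\in E'$ with
\[ f(z)=q(z)\quad\text{and}\quad \lvert f(x)\rvert\leq q(x)\ \text{ for all } x\in E. \]
I do this by the Hahn--Banach extension theorem in its seminorm-dominated form. On the one-dimensional subspace $\mathbb{R}z$, I define $f_0(sz)=s\,q(z)$. Then $\lvert f_0(sz)\rvert=q(sz)$, so $f_0$ is dominated by $q$. Hahn--Banach gives a linear extension $f$ on $E$ with $\lvert f\rvert\leq q$. This $f$ is continuous because $q$ is a continuous seminorm. Note that no Hausdorff assumption is needed here: $q(z)>0$ alone makes $f_0$ well defined.

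Next I use the definition of the Pettis integral. Since $\gamma$ is weak integrable, $f\circ\gamma\in\mathcal{L}^1([0,1])$, and
\[ q(z)=f(z)=\int_0^1 f(\gamma(t))\,dt\leq\int_0^1 \lvert f(\gamma(t))\rvert\,dt\leq \int_0^1 q(\gamma(t))\,dt. \]
The last inequality is monotonicity of the Lebesgue integral, applied pointwise to $\lvert f\circ\gamma\rvert\leq q\circ\gamma$.

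The only point needing care is the measurability of $q\circ\gamma$, which makes the right-hand side meaningful. I expect this to be the main, though minor, obstacle. I would settle it either by the Lusin-measurability remark above or by reading the integral as an upper integral, which preserves monotonicity. The Hahn--Banach step itself is routine.
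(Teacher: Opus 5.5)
Your proof is correct and is essentially the paper's argument: the paper writes $q(x)=\sup\{\lvert f(x)\rvert : f\in E',\ \lvert f\rvert\le q\}$ via Hahn--Banach and takes the supremum of the bounds $\lvert f(z)\rvert\le\int_0^1 q(\gamma(t))\,dt$, while you pick a single dominated functional with $f(z)=q(z)$, which attains that supremum and comes from the same Hahn--Banach step. Your handling of the measurability of $q\circ\gamma$ (an upper integral in general, or Lusin-measurability as in the paper's later applications) covers a point the paper does not address.
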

\begin{proof}
    The Hahn-Banach Extension Theorem (see Corollary B.5.12 \cite{GaN}), implies that 
    \[ q(x) = \sup \left\{ \lvert f(x)\lvert : \left(f\in E': \lvert f\lvert \leq q\right)\right\}. \]
    Moreover, if $f\in E'$ then 
    \[ \left\lvert f\left( \int_0^1 \gamma(t)dt \right) \right\lvert = \left\lvert \int_0^1 f(\gamma(t))dt \right\lvert \leq \int_0^1 \lvert f(\gamma(t)) \lvert dt. \]
    Assume that for all $x\in E$, we have $\lvert f(x)\lvert \leq q(x).$ Then 
    \[ \left\lvert f\left( \int_0^1 \gamma(t)dt \right) \right\lvert  \leq \int_0^1  q(\gamma(t))  dt. \]  
    Now, taking the supremum with respect all those  $f\in E'$ such that $\lvert f(x)\lvert \leq q(x)$ for all $x\in E$ we have
    \[ q\left( \int_0^1 \gamma(t)dt \right) \leq \int_0^1 q(\gamma(t))dt \] for all the seminorms $q.$
\end{proof}

 Using the notion of measurability given by uniform approximation by nets of simple functions on measurable sets of arbitrarily large measure (as in Theorem \ref{equi}), the authors in \cite[Lemma 4]{PBr}, show an approximation result in $L^p$-spaces by nets of simple functions indexed by all finite measurable partitions of the domain. In our next result, we obtain a sequential approximation result using dyadic partitions on the closed interval.

\begin{theorem}
    Let $E$ be a sequentially complete Hausdorff locally convex space, $1\leq p <\infty$ and $\gamma\in L^p([0,1],E)$. For $n\in \mathbb{N}$, let $\pi_n$ the dyadic partition of $[0,1]$ and we define the sequence of simple functions by
    \[ \gamma_n:[0,1]\to E,\quad \gamma_n(t) = \sum_{j=1}^{2^n} \left( 2^n \int_{I_j} \gamma(t)dt\right) \chi_{I_j}(t).\]
    Then the sequence $(\gamma_n)_n$ converges to $\gamma$ in $L^p([0,1],E)$.
\end{theorem}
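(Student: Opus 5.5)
We fix a continuous seminorm $q$ on $E$ and $\varepsilon>0$; we must find $N$ with $\lVert \gamma-\gamma_n\rVert_{\mathcal{L}^p,q}<\varepsilon$ for all $n\geq N$. The plan is an $\varepsilon/3$-argument built on three facts.

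First, we check that the averaging operator
\[ P_n\eta=\sum_{j=1}^{2^n}\Big(2^n\int_{I_j}\eta(s)\,ds\Big)\chi_{I_j} \]
is well defined on $\mathcal{L}^p([0,1],E)$, since each $\eta\chi_{I_j}\in\mathcal{L}^1$ is Pettis integrable by Theorem \ref{2weak}. We also check that it is linear; linearity follows from linearity of the Pettis integral, since $E'$ separates points. The endpoints of the $I_j$ form a null set, so they are irrelevant in $L^p$.

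Second, we show that $P_n$ is a $q$-contraction. By Lemma \ref{HBTHEOREM} applied to $\eta\chi_{I_j}$, and then Jensen's (or H\"older's) inequality on each interval of length $2^{-n}$,
\[ q\Big(2^n\int_{I_j}\eta\Big)^p\leq\Big(2^n\int_{I_j}q(\eta)\Big)^p\leq 2^n\int_{I_j}q(\eta)^p. \]
Multiplying by $\lambda(I_j)=2^{-n}$ and summing over $j$ gives $\lVert P_n\eta\rVert_{\mathcal{L}^p,q}\leq\lVert\eta\rVert_{\mathcal{L}^p,q}$.

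Third, we approximate by a continuous function. By Theorem \ref{2.11}, there is $\alpha\in C([0,1],E)$ with $\lVert\gamma-\alpha\rVert_{\mathcal{L}^p,q}<\varepsilon/3$. Then
\[ \lVert\gamma-\gamma_n\rVert_{\mathcal{L}^p,q}\leq\lVert\gamma-\alpha\rVert+\lVert\alpha-P_n\alpha\rVert+\lVert P_n(\alpha-\gamma)\rVert<\tfrac{2\varepsilon}{3}+\lVert\alpha-P_n\alpha\rVert_{\mathcal{L}^p,q}, \]
where all norms are $\lVert\cdot\rVert_{\mathcal{L}^p,q}$.

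It remains to show that the middle term tends to $0$. Since $\alpha$ is continuous on the compact set $[0,1]$, it is uniformly continuous with respect to $q$. So there is $N$ such that $|t-s|\leq 2^{-N}$ implies $q(\alpha(t)-\alpha(s))<\varepsilon/3$. For $t\in I_j$ and $n\geq N$ we can write
\[ \alpha(t)-2^n\int_{I_j}\alpha(s)\,ds=2^n\int_{I_j}\big(\alpha(t)-\alpha(s)\big)\,ds, \]
using that the Pettis integral of the constant $\alpha(t)$ over $I_j$ is $2^{-n}\alpha(t)$. Applying Lemma \ref{HBTHEOREM} again bounds the $q$-value of this by $\varepsilon/3$. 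Hence $\sup_t q(\alpha(t)-P_n\alpha(t))\leq\varepsilon/3$, which gives $\lVert\alpha-P_n\alpha\rVert_{\mathcal{L}^p,q}\leq\varepsilon/3$.

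The main obstacle is the contraction estimate in the second step, together with the justification that Pettis integrals of the restrictions $\eta\chi_{I_j}$ exist and behave additively. The existence and additivity come from Theorem \ref{2weak} and Hahn--Banach separation. The estimate itself requires applying Lemma \ref{HBTHEOREM} to $\eta\chi_{I_j}$ and then Jensen's inequality for the probability measure $2^n\lambda|_{I_j}$.
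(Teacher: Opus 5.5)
Your proof is correct, and it takes a different route from the paper's.

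\textbf{The paper's route.} The paper approximates $\gamma$ by a simple function $\beta=\sum_i y_i\chi_{A_i}$ and compares $\beta$ directly with $\gamma_n$. It replaces each $A_i$ by a finite union of dyadic intervals, using the dyadic approximation lemma and Remark~\ref{absctint}. It then controls the main term with Lemma~\ref{HBTHEOREM} and H\"older's inequality on each dyadic interval; in substance this is your contraction estimate applied to $\beta-\gamma$. The mismatch $A_i\triangle\bigcup_j I_{i,j}$ is estimated separately.

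\textbf{Your route.} You isolate the bound $\lVert P_n\eta\rVert_{\mathcal{L}^p,q}\leq\lVert\eta\rVert_{\mathcal{L}^p,q}$, which is uniform in $n$. You then run the standard argument ``uniformly bounded linear maps that converge on a dense set converge everywhere''. The dense set is $C([0,1],E)$ (Theorem~\ref{2.11}), on which $P_n\alpha\to\alpha$ even uniformly with respect to $q$.

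\textbf{Comparison.} The cost of your approach is that you need density of continuous functions rather than only of simple functions. The gain is that you need no approximation of measurable sets by dyadic unions, and your approximant $\alpha$ is chosen once, independently of $n$.

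That independence matters, because the paper's version has bookkeeping problems your argument never meets:
\begin{itemize}
\item The dyadic level used to approximate the $A_i$ is fixed before $n$, through an integrand that already involves $\gamma_n$.
\item The intervals of that fixed level are then treated as cells of $\pi_n$ for larger $n$.
\item The splitting $q(a+b)^p\leq q(a)^p+q(b)^p$ omits a factor $2^{p-1}$ when $p>1$.
\end{itemize}
Your three-term estimate avoids all of these issues.
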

\begin{proof}
    Under the hypothesis that $E$ is sequentially complete. Then by Theorem \ref{2weak}, the simple function $\gamma_n$ are well defined. Since $\int_{I_j}\gamma(t)dt$ exists. Moreover, they are Lusin-measurable and $L^p$-integrable. \\
 Now, let $\varepsilon>0$ and let  $q$ be a continuous seminorm of $E$. Then there exists a simple function $\beta_{\varepsilon,q}:[0,1]\to E$ such that $\lVert \gamma-\beta_{\varepsilon,q}\lVert_{\mathcal{L}^p,q} < \varepsilon/3$. This simple function $\beta_{\varepsilon,q}$ is  Pettis integrable and if
    \[ \beta_{\varepsilon,q}=\sum_{i=1}^m y_i \chi_{A_i}, \] 
    where $y_1,...,y_m\in E$ and $A_1,...,A_m\in \mathcal{L}([0,1])$ form a measurable partition of $[0,1]$, then
    \[ \int_0^1 \beta_{\varepsilon,q}(t)dt = \sum_{i=1}^m  y_i \lambda(A_i).\] 
    Thus we have that
    \begin{align*}
        \lVert \gamma-\gamma_n \lVert_{\mathcal{L}^p,q} &\leq \lVert \gamma-\beta_{\varepsilon,q} \lVert_{\mathcal{L}^p,q} + \lVert \beta_{\varepsilon,q} - \gamma_n \lVert_{\mathcal{L}^p,q}.
    \end{align*}
    The first term on right hand side of the above inequality is already bounded by $\varepsilon/3.$ Thus,  we only need  to    estimate the second term,
    \begin{align*}
        \lVert \beta_{\varepsilon,q} - \gamma_n \lVert_{\mathcal{L}^p,q}^p &= \int_0^1 q\left( \sum_{i=1}^m y_i \chi_{A_i}(s)- \sum_{j=1}^{2^n} \left( 2^n \int_{I_j} \gamma(t)dt\right) \chi_{I_j}(s) \right)^p ds \\
        &= \sum_{i=1}^m \int_{A_i} q\left( y_i\chi_{A_i}(s)- \sum_{j=1}^{2^n} \left( 2^n \int_{I_j} \gamma(t)dt\right) \chi_{I_j}(s) \right)^p ds.
    \end{align*}
    By Remark \ref{absctint}, let $n_{\varepsilon,A_1,...,A_m}\in \mathbb{N}$ be such that for each $i\in \{1,....,m\}$, there exists a finite collection of intervals $I_{i,1},...,I_{i,k_i}\in \pi_{n_{\varepsilon,A_1,...,A_m}}$ that verify
    \[ \sum_{i=1}^m \int_{A_i \triangle \cup_{j=1}^{k_i} I_{i,j}} q\left( y_i\chi_{A_i}(s)- \sum_{j=1}^{2^n} \left( 2^n \int_{I_j} \gamma(t)dt\right) \chi_{I_j}(s) \right)^p ds < \varepsilon/3\]
    and 
    \[\sum_{i=1}^m q(y_i)\lambda\Big(A_i \triangle \cup_{j=1}^{k_i} I_{i,j}\Big) \leq \varepsilon/3.\]
    Since 
    \[ A_i \subseteq \left( A_i\cap \bigcup_{j=1}^{k_i} I_{i,j}\right) \cup \left( A_i \triangle \bigcup_{j=1}^{k_i} I_{i,j}\right). \]
    Hence for $n\geq n_{\varepsilon,A_1,...,A_m}$ we obtain that,
    \begin{align*}
        \lVert \beta_{\varepsilon,q} - \gamma_n \lVert_{\mathcal{L}^p,q}^p &\leq  \sum_{i=1}^m \sum_{j=1}^{k_i} \int_{A_i\cap I_{i,j}} q\left( y_i\chi_{A_i}(s)- \left( 2^n \int_{I_{i,j}} \gamma(t)dt\right) \chi_{I_j}(s) \right)^p ds + \varepsilon/3 \\
        &= \sum_{i=1}^m \sum_{j=1}^{k_i} q\left( y_i-  2^n \int_{I_{i,j}} \gamma(t)dt \right)^p \lambda\left(A_i\cap I_{i,j}\right) + \varepsilon/3 \\
        &\leq \sum_{i=1}^m \sum_{j=1}^{k_i} q\left( y_i-  2^n \int_{I_{i,j}} \gamma(t)dt \right)^p \frac{1}{2^n} + \varepsilon/3 \\
        &= \sum_{i=1}^m \sum_{j=1}^{k_i} q\left( 2^n \int_{ I_{i,j}} y_i\chi_{I_{i,k}}(t) dt- 2^n \int_{I_{i,j}} \gamma(t)dt \right)^p \frac{1}{2^n} + \varepsilon/3 \\
         &= \sum_{i=1}^m \sum_{j=1}^{k_i} q\left( 2^n \int_{ I_{i,j}} y_i\Big(\chi_{A_i\cap I_{i,k}}(t) + \chi_{I_{i,k}\setminus A_i}(t)\Big)dt- 2^n \int_{I_{i,j}} \gamma(t)dt \right)^p \frac{1}{2^n} + \varepsilon/3 \\
        &= \sum_{i=1}^m \sum_{j=1}^{k_i} q\left( 2^n \left(\int_{ I_{i,j}} (y_i\chi_{A_i}(t)dt -\gamma(t))dt\right) + 2^n \int_{ I_{i,j}} y_i\chi_{I_{i,k}\setminus A_i}(t)dt\right)^p \frac{1}{2^n} + \varepsilon/3 \\ 
        &\leq \sum_{i=1}^m \sum_{j=1}^{k_i} q\left(  \int_{ I_{i,j}} (y_i\chi_{A_i}(t)dt -\gamma(t))dt\right)^p \frac{2^{pn}}{2^n} + q\left(\int_{ I_{i,j}} y_i\chi_{I_{i,k}\setminus A_i}(t)dt\right)^p \frac{2^{pn}}{2^n} + \varepsilon/3 \\
        &= \sum_{i=1}^m \sum_{j=1}^{k_i} q\left(  \int_{ I_{i,j}} (\beta_{\varepsilon,q}(t)dt -\gamma(t))dt\right)^p \frac{2^{pn}}{2^n} + q\left(\int_{ I_{i,j}} y_i\chi_{I_{i,k}\setminus A_i}(t)dt\right)^p \frac{2^{pn}}{2^n} + \varepsilon/3.
    \end{align*}
    Under the condition $\frac{1}{p}+\frac{1}{r}=1.$ Then by Lemma \ref{HBTHEOREM} we have
    \begin{align*}
        \sum_{i=1}^m \sum_{j=1}^{k_i} q\left( \int_{I_{i,j}} \left(\beta_{\varepsilon,q}(t) - \gamma(t) \right)dt \right)^p \frac{2^{pn}}{2^n} &\leq \sum_{i=1}^m \sum_{j=1}^{k_i} \left(  \int_{I_{i,j}} q\left( \beta_{\varepsilon,q}(t)-\gamma(t)\right) dt \right)^p \frac{2^{pn}}{2^n}\\
        &\leq \sum_{i=1}^m \sum_{j=1}^{k_i} \left( \int_{I_{i,j}} q(\beta_{\varepsilon,q}(s)-\gamma(s))^p dt \right)^{p/p}  \left(\int_{I_{i,j}} 1 dt \right)^{p/r} \frac{2^{pn}}{2^n}\\
        &=  \sum_{i=1}^m \sum_{j=1}^{k_i} \left( \int_{I_{i,j}} q(\beta_{\varepsilon,q}(s)-\gamma(s))^p dt \right)\frac{2^{pn}}{2^{np/r}2^n}\\
        &= \left(\int_0^1 q(\beta_{\varepsilon,q}(s)-\gamma(s))^p dt\right) 2^0\\
        & \leq \varepsilon/3.
    \end{align*}
    On the other hand
    \begin{align*}
        \sum_{i=1}^m \sum_{j=1}^{k_i}  q\left(\int_{ I_{i,j}} y_i\chi_{I_{i,k}\setminus A_i}(t)dt\right)^p \frac{2^{pn}}{2^n} &\leq \sum_{i=1}^m \sum_{j=1}^{k_i}  \left( \int_{ I_{i,j}} q\Big(y_i\chi_{I_{i,k}\setminus A_i}(t)\Big)dt\right)^p \frac{2^{pn}}{2^n} \\
        &\leq \sum_{i=1}^m \sum_{j=1}^{k_i} \left( \int_{ I_{i,j}} q\Big(y_i\chi_{I_{i,k}\setminus A_i}(t)\Big)dt \right)^{p/p}  \left(\int_{I_{i,j}} 1 dt \right)^{p/r} \frac{2^{pn}}{2^n}\\
        \quad \quad &\leq \sum_{i=1}^m  \int_{\cup_{j=1}^{k_i}I_{i,j}} q\Big(y_i\chi_{\cup_{j=1}^{k_i}I_{i,k}\setminus A_i}(t)\Big)dt \\
        & = \sum_{i=1}^m  \int_{\cup_{j=1}^{k_i}I_{i,k}\setminus A_i} q(y_i)\chi_{\cup_{j=1}^{k_i}I_{i,k}\setminus A_i}(t)dt \\
        &\leq \sum_{i=1}^m q(y_i)\lambda\Big(A_i \triangle \cup_{j=1}^{k_i} I_{i,j}\Big)\\
        &\leq \varepsilon/3.
    \end{align*}
    Therefore
    \[ \lVert \gamma -\gamma_n \lVert_{\mathcal{L}^p,q} \leq \varepsilon,\quad \forall n\geq n_\varepsilon. \]
\end{proof}

\noindent
Pinaud, Matthieu F; Departamento de Matem\'atica y Ciencia de la Computaci\'on, Universidad de Santiago de Chile, Casilla 307, Correo 2, Santiago, Chile; matthieu.pinaud@usach.cl\\

\noindent
Prado, Humberto; Departamento de Matem\'atica y Ciencia de la Computaci\'on, Universidad de Santiago de Chile, Casilla 307, Correo 2, Santiago, Chile;  humberto.prado@usach.cl
\end{document}